\newtheorem{definition}{Definition}[section]
\newtheorem{theorem}[definition]{Theorem}
\newtheorem{lemma}[definition]{Lemma}
\newtheorem{remark}[definition]{Remark}
\date{}
\begin{document}
\baselineskip 18pt
\bibliographystyle{plain}
\title[A GRABP method for linear feasibility problems]
{A greedy randomized average block projection method for linear feasibility problems}

\author{Lin Zhu}
\address{School of Mathematics, Hunan University, Changsha 410082, China.}
\email{zhulin@hnu.edu.cn}

\author{Yuan Lei}
%\thanks{Yuan Lei was supported by NSFC grant (11871205).}
\address{School of Mathematics, Hunan University, Changsha 410082, China.}
\email{yleimath@hnu.edu.cn}

\author{Jiaxin Xie}
\address{LMIB of the Ministry of Education, School of Mathematical Sciences, Beihang University, Beijing, 100191, China. }
\email{xiejx@buaa.edu.cn}

\begin{abstract}
The randomized projection (RP) method is a simple iterative scheme for solving linear feasibility problems and has recently gained  popularity due to its speed and low memory requirement. This paper develops an accelerated variant of the standard RP method by using  two ingredients: the \emph{greedy probability} criterion and the \emph{average block} approach, and obtains a \emph{greedy randomized average block projection} (GRABP) method  for  solving large-scale systems of linear inequalities. We prove that this method  converges linearly in expectation under different choices of extrapolated stepsizes. Numerical experiments on both randomly generated and real-world  data  show the advantage of GRABP  over several state-of-the-art solvers, such as the randomized projection (RP) method, the sampling Kaczmarz Motzkin (SKM) method, the generalized SKM (GSKM) method, and the  Nesterov  acceleration of SKM method.
\end{abstract}

\let\thefootnote\relax\footnotetext{Key words: Linear feasibility,  Randomized projection method, Average block,  Greedy probability criterion, Convergence property}
\maketitle

% ---------------------- section introduction ----------------------
\section{Introduction}\label{sec1}

\subsection{Model and Notation}
We consider the problem of solving large-scale systems of linear inequalities
\begin{equation}\label{inequalities}
	Ax\leq b,
\end{equation}
where $A \in \mathbb{R}^{m\times n}$ and $b \in \mathbb{R}^{m}$.
We confine the scope of this work  to the regime of  $m\gg n$, where iterative methods are more competitive for such problems.  We denote the feasible region of \eqref{inequalities} by $S= \{x \in \mathbb{R}^{n}\mid Ax\leq b\}$.
Throughout this paper, we assume that the coefficient matrix $A$ has no zero rows and $S\neq\emptyset$.

For a given  matrix  $G$,  we use $\| G\|_2$, $\| G\|_F$, and $G^\dagger$  to denote the spectral norm, the Frobenius  norm, and the Moore-Penrose pseudoinverse, respectively.  We use $\sigma_{\min}(G)$ to denote  the smallest nonzero singular value of the matrix $G$. For an integer $m\geq 1$, let $[m]:=\{1,\ldots,m\}$.  For any vector $x\in\mathbb{R}^n$, we use $x_i,x^\top$, $\|x\|_2$, and $\|x\|_p$ to denote the $i$-th entry, the transpose, the Euclidean norm and the $p$-norm of $x$, respectively. For any $u\in\mathbb{R}$ and $v\in\mathbb{R}^n$, we define $(u)_+=\max\{0,u\}$ and $(v)_{+}=((v_1)_+,\ldots,(v_n)_+)^\top$. We refer to $\{\mathcal{I}_1,\mathcal{I}_2,\cdots,\mathcal{I}_t\}$ as a partition of $[m]$ if  $\mathcal{I}_i\bigcap \mathcal{I}_j=\emptyset$ for $i\neq j$ and $\bigcup^t_{i=1}\mathcal{I}_i=[m]$. For a given index set $\mathcal{I}_i$, we use $G_{\mathcal{I}_i,:}$ to denote the row submatrix of the matrix $G$ indexed by $\mathcal{I}_i$ and $u_{\mathcal{I}_i}$ denote the subvector of the vector $u$ with components listed in $\mathcal{I}_i$.
We use $P_S(u)$ to represent the orthogonal projection of $u$ onto the  feasible  region $S$.
For any random variables $\xi$ and $\zeta$, we use $\mathbb{E}[\xi]$ and $\mathbb{E}[\xi\lvert \zeta]$ to denote the expectation of $\xi$ and the conditional expectation of $\xi$ given $\zeta$, respectively.

\subsection{The randomized Kaczmarz method}
The  Kaczmarz method \cite{Kac37}, also known as \emph{  the algebraic reconstruction technique} (ART) \cite{herman1993algebraic,gordon1970algebraic},  is a widely used algorithm for solving the linear system $Ax=b$.
Starting from  $x^0\in\mathbb{R}^n$, the canonical Kaczmarz method constructs $x^{k+1}$ by
$$
x^{k+1}=x^k-\frac{\langle A_{i,:},x^k\rangle-b_i}{\|A_{i,:}\|^2_2}A_{i,:},
$$
where $ i $ is selected from $ [m ]$ cyclically.
In fact, the current iterate is projected orthogonally onto the selected hyperplane  $\{x\mid \langle A_{i,:},x\rangle=b_i\}$ at each iteration.
The iteration sequence $\{x^k\}_{k=0}^\infty$ converges to $x^0_*:=A^\dagger b+(I-A^\dagger A)x^0$. However, the rate of convergence is hard to obtain. In the seminal paper \cite{Str09}, Strohmer and Vershynin first analyzed the randomized variant of the Kaczmarz method (RK). Specifically, they proved that if the $i$-th row of $A$ is selected with probability proportional to $\|A_{i,:}\|_2^2$, then the method converges linearly in expectation.

 Leventhal and Lewis \cite{Lev10} extended the randomized Kaczmarz method to solve the linear feasibility problem \eqref{inequalities}.
At each iteration $k$, if the inequality is already satisfied for the selected row $i$, then set $x_{k+1}=x_{k}$. If the inequality is not satisfied, the previous iterate only projects onto the solution hyperplane $\{x\mid \langle A_{i,:},x\rangle=b_i\}$. The update rule for this algorithm is thus
\begin{equation}\label{pj-lf}
	x^{k+1}=x^k-\frac{(A_{i,:}x^k-b_i)_+}{\|A_{i,:}\|^2_2}(A_{i,:})^\top.
\end{equation}
One can see that $x^{k+1}$ in \eqref{pj-lf} is indeed the projection of $x^k$ onto the set $\{x\mid A_{i,:}x\leq b_i\}$.
Leventhal and Lewis \cite{Lev10} (Theorem 4.3) proved that such  \emph{randomized projection} (RP) method converges to a feasibility solution linearly in expectation.

Recently, by  combining the ideas of Kaczmarz and Motzkin methods \cite{agmon1954relaxation,motzkin1954relaxation}, Loera, Haddock, and Needell \cite{De17} proposed the \emph{sampling Kaczmarz-Motzkin} (SKM) method for solving   the linear feasibility problem \eqref{inequalities}. Later, Morshed, Islam, and Noor-E-Alam \cite{morshed2021sampling} developed a generalized framework, namely the \emph{generalized sampling Kaczmarz-Motzkin} (GSKM) method that
extends the SKM algorithm and proves the existence of a family of SKM-type methods. In addition, they also proposed a Nesterov-type acceleration scheme in the SKM method called
\emph{probably accelerated sampling Kaczmarz-Motzkin} (PASKM), which provides a
bridge between Nesterov-type acceleration of machine learning to sampling Kaczmarz
methods for solving linear feasibility problems.

\subsection{The greedy probability criterion}

The greedy probability criterion was originally proposed by Bai and Wu \cite{Bai18Gre} for effectively selecting the working row
from the matrix $A$, and a \emph{greedy randomized Kaczmarz} (GRK)  method which is faster than the RK method in terms of the number of iterations and computing time is introduced.
Indeed, at the $k$-th iteration,  GRK determines a subset $\mathcal{U}_k$ of $[m]$ such that the magnitude of the residual $\langle A_{i,:},x^{k}\rangle-b_i$ exceeds a threshold
i.e.,
$$
\mathcal{U}_k=\left\{ i_k \ \big| \ | \langle A_{i_k,:},x^{k}\rangle-b_{i_k}|^2 \geq \varepsilon_k\|Ax^{k}-b\|_2^2\|A_{i_k,:}\|_2^2\right\},
$$
where
$
\varepsilon_k=\frac{1}{2}\left(\frac{1}{\left\|Ax^{k}-b\right\|_2^2} \max\limits _{1 \leq i \leq m}\left\{\frac{\left|\langle A_{i,:},x^{k}\rangle-b_i\right|^2}{\left\|A_{i,:}\right\|_2^2}\right\}+\frac{1}{\|A\|_F^2}\right).
$
Then, a modified residual vector $\tilde{r}^{k}$ is defined by
$$
\tilde{r}_i^{k}= \begin{cases}\langle A_{i,:},x^{k}\rangle-b_i, & \text { if } i \in \mathcal{U}_k, \\ 0, & \text { otherwise}.\end{cases}
$$
GRK selects the index $i_k \in \mathcal{U}_k$ of the working row with  probability
$$
\operatorname{Pr}\left(\text{row }=i_k\right)=\frac{\left|\tilde{r}_{i_k}^k\right|^2}{\left\|\tilde{r}^{k}\right\|_2^2} .
$$
Finally, GRK orthogonally projects the current iterate $x^k$ onto the $i_k$-th hyperplane $\{x\mid \langle A_{i_k,:},x\rangle=b_i\}$ to obtain the next iterate $x^{k+1}$. By using the above greedy idea, small entries of the residual vector $Ax^k-b$ may not be selected, which guarantees the progress of each iteration of GRK and a faster convergence rate of GRK may be expected than that of RK.
The idea of greed applied in the literature \cite{Bai18Gre} has wide applications and has been used in many works, see for example the literatures \cite{morshed2020stochastic,bai2021greedy,Gow19,yuan2022adaptively} and  the references therein.

\subsection{The block Kaczmarz method}
The block Kaczmarz method first partitions the rows $[m]$ into $t$ blocks, denoted $\mathcal{I}_1,\ldots,\mathcal{I}_t$. Instead of selecting one row per-iteration as done with the simple Kaczmarz method, the block Kaczmarz algorithm chooses a block uniformly at random at each iteration.
Needell and Tropp \cite{needell2014paved} proposed a \emph{randomized block Kaczmarz} (RBK), where at each iteration, the previous iterate $x_k$ is projected onto the solution space to $A_{\mathcal{I}_{i_k},:}x = b_{\mathcal{I}_{i_k}}$. However, each iterate of this RBK method needs to apply the pseudoinverse of the chosen submatrix to a vector and it is expensive.

Recently, Necoara \cite{Nec19} developed a \emph{randomized average
	block Kaczmarz} (RABK)
algorithm for linear systems which takes a convex combination of several
RK updates as a new direction with some stepsize.
Assuming that the subset $\mathcal{I}_{i_k}$ has been selected at the $k$-th iteration, RABK generates the $k$-th estimate $x^{k+1}$ via
\begin{equation}\label{rabk-g}
	x^{k+1}=x^{k}-\alpha_k\left(\sum_{j \in \mathcal{I}_{i_k}} \omega_j^k \frac{A_{j,:} x^{k}-b_j}{\left\|A_{j,:}\right\|_2^2}\left(A_{j,:}\right)^{\top}\right),
\end{equation}
where the weights $\omega_j^k \in[0,1]$ such that $\sum_{j \in \mathcal{I}_{i_k} } \omega_j^k=1$ and the stepsize $\alpha_k \in(0,2)$. The convergence analysis reveals that RABK is extremely effective when it is given a good sampling of the rows into well-conditioned blocks.  Specifically, if
$
\omega_j^k=\frac{\left\|A_{j,:}\right\|_2^2}{\left\|A_{\mathcal{I}_{i_k},:}\right\|_{F}^2}$ with $ j \in \mathcal{I}_{i_k},
$
then \eqref{rabk-g} becomes
\begin{equation}\label{rabk}
	x^{k+1}=x^k-\alpha_k\frac{(A_{\mathcal{I}_{i_k},:})^\top(A_{\mathcal{I}_{i_k},:}x^k-
		b_{\mathcal{I}_{i_k}})}{\|A_{\mathcal{I}_{i_k},:}\|^2_F}.
\end{equation}
Recently, the iteration scheme \eqref{rabk} has  been used in many works.
In the literature \cite{miao2022greedy},  Miao and Wu proposed a greedy randomized average block Kaczmarz method  for solving linear systems.   Necoara \cite{necoara2022stochastic}  used the idea of blocks of sets to develop  accelerated RP methods for convex feasibility problems.
For another block
version of the RK method, we refer to the literatures \cite{Du20Ran,moorman2021randomized} and  the references therein.

\subsection{Our contribution}

This paper extends  the ideas of the greedy probability criterion and the average block approach to solve linear feasibility problems, obtaining a \emph{greedy randomized average block projection} (GRABP) method. Recall that  $\{\mathcal{I}_1,\mathcal{I}_2,\cdots,\mathcal{I}_t\}$ is a partition of the row index set $[m]$ of the matrix $A$.
At each step, we  greedily choose a nonempty index set $\mathcal{U}_k$ using an adaptive thresholding rule so that for any $i \in \mathcal{U}_k$, the norm of the residual $(\langle A_{\mathcal{I}_{i},:},x^{k}\rangle-b_{\mathcal{I}_{i}})_{+}$ should be larger than a prescribed threshold.
After selecting an index $i_k\in\mathcal{U}_k$ with a certain probability criterion, we project the current iteration vector onto each feasible
region $\{x\mid A_{i_k,:}x\leq b_{i_k}\}$ with $i_k\in\mathcal{U}_k$, average them, and apply extrapolated step sizes to construct the GRABP method.
Relying on a lemma due to Hoffman \cite{hoffman2003approximate,Lev10}, two kinds
of extrapolated stepsizes for the GRABP method are analyzed.
The numerical results show the advantage of the GRABP method over several state-of-the-art solvers, such as the RP method, the SKM method, the GSKM method, and the PASKM method.

\subsection{Organization}
The organization of this paper is as follows. We give the GRABP
method for solving the linear  feasibility problems in Section 2 and its convergence analysis in Section 3.
Numerical experimental results are presented in Section 4. Finally, we end this paper with concluding remarks in Section 5.
\section{The greedy randomized average block Kaczmarz method}
In this section, we  introduce the  GRABP method for solving the linear feasibility problem \eqref{inequalities}.
The method is formally described in Algorithm \ref{GRABP}.
\begin{algorithm}
	\caption{(The GRABP method) \label{GRABP}}
	\begin{algorithmic}
		\Require
		$A\in \mathbb{R}^{m\times n}$, $b\in \mathbb{R}^m$, $k=0$, $K$, $t$  and an initial $x^0\in \mathbb{R}^{n}$.
		\begin{enumerate}
			\item[1:]  Let $\{\mathcal{I}_1,\mathcal{I}_2,\cdots,\mathcal{I}_t\}$ be a partition of $[m]$.
			
			{\bf while $k< K$ do}
			\item[2:]  Compute
			\begin{equation}\label{epsilon_k}
				\epsilon_k=\frac{1}{2}\left(\frac{1}{\|(Ax^k-b)_+\|^2_2}\max\limits_{1\leq i\leq t} \frac{\|(A_{\mathcal{I}_i,:}x^k-b_{\mathcal{I}_i})_+\|^2_2}{\|A_{\mathcal{I}_i,:}\|^2_F}+\frac{1}{\| A\|^2_F}\right).
			\end{equation}
			
			\item[3:]   Determine the index set of positive integers
			\begin{equation}\label{Uk}
				\mathcal{U}_k=\left\{i \mid \|(A_{\mathcal{I}_i,:}x^k-b_{I_i})_+\|^2_2 \geq \epsilon_k \|(Ax^k-b)_+\|^2_2 \|A_{\mathcal{I}_i,:}\|^2_F \right\}.
			\end{equation}
			
			\item[4:]  Select $i_k \in \mathcal{U}_k$ according to probability $$\Pr(\text{index}=i)=p_{k,i}, \:\:\:  i=1, 2, \cdots, t,$$
			with $p_{k,i}=0$ if $i\notin \mathcal{U}_k$, $p_{k,i}\geq 0$ if $i\in \mathcal{U}_k$, and $\sum\limits_{i\in \mathcal{U}_k}p_{k,i}=1$.

			\item[5:]  Update
			$$x^{k+1}=x^k-\alpha_k\frac{(A_{\mathcal{I}_{i_k},:})^\top(A_{\mathcal{I}_{i_k},:}x^k-
				b_{\mathcal{I}_{i_k}})_+}{\|A_{\mathcal{I}_{i_k},:}\|^2_F},$$
			and set  $k=k+1$.
			
		\end{enumerate}
		\Ensure
		The approximate solution $x^{K}$.
	\end{algorithmic}
\end{algorithm}

\begin{remark}
	As done in the literatures \cite{bai2018relaxed,zhang2020relaxed}, we can introduce an arbitrary relaxation parameter $\theta\in[0,1]$ into $\epsilon_k$ in Algorithm \ref{GRABP}, i.e.,
	$$
	\epsilon_k=\frac{\theta}{\|(Ax^k-b)_+\|^2_2}\max\limits_{1\leq i\leq t} \frac{\|(A_{\mathcal{I}_i,:}x^k-b_{\mathcal{I}_i})_+\|^2_2}{\|A_{\mathcal{I}_i,:}\|^2_F}+(1-\theta)\frac{1}{\| A\|^2_F}.
	$$
	In this case, setting $\theta=1$, we can obtain a greedy average block projection method. In fact,
	the parameter $\theta$ affects the index set $\mathcal{U}_k$ to some extent, but the algorithm maintains linear convergence regardless of the value chosen for the parameter $\theta$. This paper focuses on  the case where $\theta=\frac{1}{2}$.
\end{remark}

We next present some specific details of Algorithm \ref{GRABP}.

\noindent\textbf{ Randomized row partition.}
In the setup of Algorithm \ref{GRABP}, we need to partition the row index set of $[m]$ of the coefficient matrix $A$ into $\{\mathcal{I}_1,\mathcal{I}_2,\cdots,\mathcal{I}_t\}$.
The row partition of the matrix has been extensively discussed in the literatures \cite{tropp2009column,Nec19,necoara2022stochastic,xie2021subset}. In this paper, we use a simple partitioning strategy, i.e.,
\begin{equation}\label{Ik}
	\mathcal{I}_i=\left\{\varpi(k): k=\lfloor (i-1)m/t\rfloor+1, \lfloor (i-1)m/t\rfloor+2,\ldots,\lfloor im/t\rfloor\right\}, i=1, 2, \ldots, t,
\end{equation}
where $\varpi$ is a permutation on $[m]$ chosen uniformly at random, and $\lfloor \cdot \rfloor$ denotes the largest integer which is smaller than or equal to a certain number.

\noindent\textbf{ Probability strategy.} There are many choices for the probability strategy in step 4 of Algorithm \ref{GRABP}. Let $p>0$, $\mu>0$ and  for $k=0, 1, 2,\cdots$, $$\tilde{r}^k_{I_i}=\left\{\begin{array}{ll}
(A_{I_i,:}x^k-b_{I_i})_+,& \text{if} \ i\in \mathcal{U}_k, \\
0,& \ \text{otherwise}.
\end{array}\right.$$
Here we use the following two different probability strategies:
\begin{equation}\label{pki1}
	\begin{aligned}
		p_{k,i}=\left\{\begin{array}{ll}\frac{\|\tilde{r}^k_{\mathcal{I}_i}\|_p^p}{ \sum\limits_{i\in \mathcal{U}_k}\|\tilde{r}^k_{\mathcal{I}_i} \|^p_{p}}, &\text{if} \ i\in \mathcal{U}_k, \\
			0,& \text{otherwise},
		\end{array}\right.
	\end{aligned}
\end{equation}
and
\begin{equation}\label{pki2}
	\begin{aligned}
		p_{k,i}=\left\{\begin{array}{ll}\frac{\|\tilde{r}^k_{\mathcal{I}_i}\|_2^\mu}{ \sum\limits_{i\in \mathcal{U}_k}\|\tilde{r}^k_{\mathcal{I}_i} \|^\mu_2},& \text{if} \ i\in \mathcal{U}_k, \\
			0,& \text{otherwise}.
		\end{array}\right.
	\end{aligned}
\end{equation}
Obviously, both (\ref{pki1}) and (\ref{pki2}) satisfy   $p_{k,i}=0$ if $i\notin \mathcal{U}_k$, $p_{k,i}\geq 0$ if $i\in \mathcal{U}_k$, and $\sum\limits_{i\in \mathcal{U}_k} p_{k,i}=1$.
When $p = u = 2$,  the above two probability strategies are the same.
In subsequent proofs and analyses, we find that the GRABP method converges regardless of the probability strategy chosen.

\noindent\textbf{ The choices of stepsize.}
It is well-known that the stepsize affects the convergence of the algorithm.
Here, we focus on two choices of the iteration stepsize  $\alpha_k$. One is a constant stepsize, i.e.,  $\alpha_k$ is equal to  $\alpha \in (0,\frac{2}{\zeta})$ with
\begin{equation}\label{xi}
	\zeta=\max\limits_{1\leq i\leq t}\frac{\sigma^2_{\max}(A_{\mathcal{I}_i,:})}{\|A_{\mathcal{I}_i,:}\|^2_F}.
\end{equation}
Another is adaptive stepsize
\begin{equation}\label{alpha}
	\alpha_k=w\frac{\|(A_{\mathcal{I}_{i_k},:}x^k-b_{\mathcal{I}_{i_k}})_+\|^2_2\| A_{\mathcal{I}_{i_k},:}\|^2_F}{\|(A_{\mathcal{I}_{i_k},:})^\top(A_{\mathcal{I}_{i_k},:}x^k-b_{\mathcal{I}_{i_k}})_+\|^2_2}  \end{equation}
with $w\in(0,2)$.
Since at the $k$-th iterate of the GRABP method, it  always hold that
$\left\|(A_{\mathcal{I}_{i_k,:}}x^k-b_{\mathcal{I}_{i_k}})_+\right\|^2_2\neq0,$
and then we have
\begin{equation}
	\label{equ-1025}
	\left\|(A_{\mathcal{I}_{i_k},:})^\top(A_{\mathcal{I}_{i_k},:}x^k-b_{\mathcal{I}_{i_k}})_+\right\|^2_2\neq0.
\end{equation}
Indeed, if \eqref{equ-1025} does not hold, i.e.
$
(A_{\mathcal{I}_{i_k},:})^\top(A_{\mathcal{I}_{i_k},:}x^k-b_{\mathcal{I}_{i_k}})_+=0
$.
Then it holds that $$\langle A_{\mathcal{I}_{i_k},:}(x^k-P_S( x^{k})), (A_{\mathcal{I}_{i_k},:}x^k-b_{\mathcal{I}_{i_k}})_+\rangle=0.$$
Noting that $P_S( x^{k})\in S$, hence we have $A_{\mathcal{I}_{i_k},:}P_S( x^{k})\leq b_{\mathcal{I}_{i_k}}$. So
$$
\begin{aligned}
0&=\langle A_{\mathcal{I}_{i_k},:}(x^k-P_S( x^{k})), (A_{\mathcal{I}_{i_k},:}x^k-b_{\mathcal{I}_{i_k}})_+\rangle
\\
&\geq\langle A_{\mathcal{I}_{i_k},:} x^k-b_{\mathcal{I}_{i_k}}, (A_{\mathcal{I}_{i_k},:}x^k-b_{\mathcal{I}_{i_k}})_+\rangle
\\
&=\left\|(A_{\mathcal{I}_{i_k,:}}x^k-b_{\mathcal{I}_{i_k}})_+\right\|^2_2,
\end{aligned}$$
which is contrary to the fact that $\left\|(A_{\mathcal{I}_{i_k,:}}x^k-b_{\mathcal{I}_{i_k}})_+\right\|^2_2\neq 0$.
So the adaptive step size in (\ref{alpha}) is well defined.
For convenience, we refer to the GRABP method with  constant stepsize as GRABP-c. The GRABP method with  adaptive stepsize, we refer to as GRABP-a.
In Section \ref{sect:3}, we will discuss the convergence properties of the GRABP-c and GRABP-a methods, respectively.

Finally, let us briefly state that the index set $\mathcal{U}_k$ in (\ref{Uk}) is well defined, i.e., the index set $\mathcal{U}_k$ is nonempty. Indeed,
assume that the index $\mathcal{I}_{i_k}$ satisfies
$$\frac{\left\|\left(A_{\mathcal{I}_{i_k},:} x^k-b_{\mathcal{I}_{i_k}}\right)_{+}\right\|_2^2}{\left\|A_{\mathcal{I}_{i_k},:}\right\|_F^2}:=\max _{1 \leq i \leq t} \frac{\left\|\left(A_{\mathcal{I}_i,:} x^k-b_{\mathcal{I}_i}\right)_{+}\right\|_2^2}{\left\|A_{\mathcal{I}_i,:}\right\|_F^2}.$$
One can verified that
$$
\begin{aligned}
\frac{\left\|\left(A_{\mathcal{I}_{i_k},:} x^k-b_{\mathcal{I}_{i_k}}\right)_{+}\right\|_2^2}{\left\|A_{\mathcal{I}_{i_k},:}\right\|_F^2} & \geq \sum_{i=1}^t \frac{\left\|A_{\mathcal{I}_i,:}\right\|_F^2}{\|A\|_F^2} \frac{\left\|\left(A_{\mathcal{I}_i,:} x^k-b_{\mathcal{I}_i}\right)_{+}\right\|_2^2}{\left\|A_{\mathcal{I}_i,:}\right\|_F^2} \\
&=\frac{\left\|\left(A x^k-b\right)_{+}\right\|_2^2}{\|A\|_F^2},
\end{aligned}
$$
and
$$
\begin{aligned}
\frac{\left\|\left(A_{\mathcal{I}_{i_k},:} x^k-b_{\mathcal{I}_{i_k}}\right)_{+}\right\|_2^2}{\left\|A_{\mathcal{I}_{i_k},:}\right\|_F^2} & \geq \frac{1}{2} \max _{1 \leq i \leq t} \frac{\left\|\left(A_{\mathcal{I}_i,:} x^k-b_{\mathcal{I}_i}\right)_{+}\right\|_2^2}{\left\|A_{\mathcal{I}_i,:}\right\|_F^2}+\frac{1}{2} \frac{\left\|\left(A x^k-b\right)_{+}\right\|_2^2}{\|A\|_F^2} \\
&=\epsilon_k\left\|\left(A x^k-b\right)_{+}\right\|_2^2.
\end{aligned}
$$
This implies that the index $i_k$ always belongs to $\mathcal{U}_k$ and hence the index set $\mathcal{U}_k$ will always  be nonempty.
In addition, we note that the index set $\mathcal{U}_k$ is flexible during the iteration, i.e., it changes as the number of iteration steps $k$ increases.

\section{Convergence analysis}
\label{sect:3}

In this section, we will discuss the convergence property of Algorithm \ref{GRABP}. Let us first introduce a crucial lemma.

\begin{lemma}[Hoffman\cite{hoffman2003approximate}]\label{Hoffman}
	Let $x\in \mathbb{R}^{n} $ and $S$ be the feasible region of the linear feasibility problem (\ref{inequalities}). Then, there exists a constant $L>0$ such that the following identity holds:
	$$ \| x-P_S( x)\|^2_2 \leq L^2 \|(Ax-b)_+\|^2_2,$$
	where $P_S(x)$ represents the orthogonal projection of $x$ onto the  feasible  region $S$.
\end{lemma}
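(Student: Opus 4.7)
The plan is to prove the classical Hoffman bound via KKT optimality and Carath\'eodory's theorem for cones, reducing the global estimate to a finite family of bounds indexed by linearly independent subsets of rows of $A$.

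If $x\in S$ the claim is trivial (both sides vanish), so I assume $x\notin S$ and set $\bar x=P_S(x)$. Since $\bar x$ minimizes $\tfrac12\|y-x\|^2_2$ over $\{y:Ay\leq b\}$, the KKT conditions yield a nonnegative multiplier $\mu\in\mathbb{R}^m$ supported on the active set $J(\bar x):=\{i:A_{i,:}\bar x=b_i\}$ with $x-\bar x=A^\top\mu$. Applying Carath\'eodory's theorem to the conical hull of the active rows $\{A_{i,:}^\top\}_{i\in J(\bar x)}$, I can select $I\subseteq J(\bar x)$ such that $\{A_{i,:}\}_{i\in I}$ is linearly independent and $x-\bar x=A_{I,:}^\top\mu_I$ with $\mu_I\geq 0$. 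Writing $M_{II}:=A_{I,:}A_{I,:}^\top$, which is positive definite because $A_{I,:}$ has full row rank, I compute
\[
\|x-\bar x\|^2_2=(x-\bar x)^\top A_{I,:}^\top\mu_I=(A_{I,:}x-b_I)^\top\mu_I=\sum_{i\in I}\mu_i(A_{i,:}x-b_i),
\]
where the second equality uses $A_{i,:}\bar x=b_i$ for $i\in I$. Since each $\mu_i\geq 0$, the summand $\mu_i(A_{i,:}x-b_i)$ is bounded above by $\mu_i(A_{i,:}x-b_i)_+$, and Cauchy-Schwarz then gives $\|x-\bar x\|^2_2\leq \|\mu_I\|_2\cdot\|(Ax-b)_+\|_2$. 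Combining this with the Rayleigh-quotient lower bound $\|x-\bar x\|^2_2=\mu_I^\top M_{II}\mu_I\geq \lambda_{\min}(M_{II})\|\mu_I\|^2_2$ and cancelling one factor of $\|x-\bar x\|_2$ yields $\|x-\bar x\|_2\leq \lambda_{\min}(M_{II})^{-1/2}\|(Ax-b)_+\|_2$.

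Finally, since only finitely many subsets $I\subseteq[m]$ yield linearly independent row submatrices of $A$, the constant
\[
L:=\max\bigl\{\lambda_{\min}(A_{I,:}A_{I,:}^\top)^{-1/2}\ :\ I\subseteq[m],\ \{A_{i,:}\}_{i\in I}\text{ linearly independent}\bigr\}
\]
is finite and depends only on $A$; squaring the derived bound then proves the claim. The main obstacle is the middle step: the inequality $\mu_i(A_{i,:}x-b_i)\leq\mu_i(A_{i,:}x-b_i)_+$ crucially exploits the KKT nonnegativity of $\mu_i$, because nothing forces $A_{i,:}x-b_i\geq 0$ for an index that is active at $\bar x$, while the Carath\'eodory reduction to a linearly independent $I$ is essential for a uniform constant, since the Gram matrix on the full active set could be singular and would otherwise make the bound vacuous.
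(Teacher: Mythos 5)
Your proof is correct. Note, however, that the paper does not prove this lemma at all: it is stated as a classical result and attributed to Hoffman, with the reader referred to the literature (and to Leventhal--Lewis, where the same bound is invoked). So there is no ``paper proof'' to match; what you have supplied is a self-contained derivation of the cited fact. Your route is the standard one: KKT conditions for the projection give $x-P_S(x)=A^\top\mu$ with $\mu\geq 0$ supported on the active set, the conical Carath\'eodory theorem reduces to a linearly independent subset $I$ so that the Gram matrix $A_{I,:}A_{I,:}^\top$ is nonsingular, and the two estimates $\|x-P_S(x)\|_2^2\leq\|\mu_I\|_2\,\|(Ax-b)_+\|_2$ and $\|\mu_I\|_2\leq\lambda_{\min}(A_{I,:}A_{I,:}^\top)^{-1/2}\|x-P_S(x)\|_2$ combine to give the claim. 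Each step checks out: the nonnegativity of the multipliers is exactly what lets you pass from $A_{i,:}x-b_i$ to $(A_{i,:}x-b_i)_+$, and the restriction to linearly independent subsets is what makes the constant uniform over the finitely many choices of $I$. Beyond filling the gap left by the citation, your argument has the added value of exhibiting the Hoffman constant explicitly as $L=\max_I \sigma_{\min}(A_{I,:})^{-1}$ over linearly independent row subsets $I$ of $A$, which the paper's black-box citation does not provide.
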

Lemma \ref{Hoffman} is a well-known result of Hoffman on systems of linear inequalities. The constant $L$ is called the Hoffman constant. We will use Lemma \ref{Hoffman} to establish two convergence theorems about the  GRABP algorithm.
For the GRABP method with constant stepsize, we have the following convergence result.

\begin{theorem}\label{convergence_1}
	Suppose that the linear feasibility problem (\ref{inequalities}) is consistent, i.e., the feasible region $S$ is nonempty, and the stepsize $\alpha_k$ of the $k$-th iteration of the GRABP method is a constant $\alpha \in (0,\frac{2}{\zeta})$ with $\zeta$ defined as in (\ref{xi}). Then the iteration sequence  $\{ x^k\}^\infty_{k=0}$ generated by the GRABP-c method satisfies
	$$
	\mathbb{E}\left[\left\| x^{k}-P_S( x^{k})\right\|^2_2 \right] \leq
	\left(1-\frac{2\alpha-\alpha^2\zeta}{L^2\| A \|^2_F}\right)^k\left\| x^{0}-P_S( x^{0})\right\|^2_2,
	$$
	where $P_S(x^k)$ represents the orthogonal projection of $x^k$ onto the  feasible  region $S$.
\end{theorem}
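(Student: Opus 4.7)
The plan is to derive a one-step contraction for $\|x^k-P_S(x^k)\|_2^2$ and then iterate. Fix the reference point $y:=P_S(x^k)$, expand the squared error using the update rule
$$\|x^{k+1}-y\|_2^2=\|x^k-y\|_2^2-2\alpha\frac{\langle A_{\mathcal{I}_{i_k},:}(x^k-y),(A_{\mathcal{I}_{i_k},:}x^k-b_{\mathcal{I}_{i_k}})_+\rangle}{\|A_{\mathcal{I}_{i_k},:}\|_F^2}+\alpha^2\frac{\|A_{\mathcal{I}_{i_k},:}^\top(A_{\mathcal{I}_{i_k},:}x^k-b_{\mathcal{I}_{i_k}})_+\|_2^2}{\|A_{\mathcal{I}_{i_k},:}\|_F^4},$$
and then attack the two last terms separately. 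Since $y\in S$ gives $A_{\mathcal{I}_{i_k},:}y\leq b_{\mathcal{I}_{i_k}}$, the inner product in the cross term is at least $\|(A_{\mathcal{I}_{i_k},:}x^k-b_{\mathcal{I}_{i_k}})_+\|_2^2$ (the same trick already used in the excerpt to justify \eqref{equ-1025}). For the quadratic term I bound $\|A_{\mathcal{I}_{i_k},:}^\top v\|_2^2\leq\sigma_{\max}^2(A_{\mathcal{I}_{i_k},:})\|v\|_2^2\leq\zeta\|A_{\mathcal{I}_{i_k},:}\|_F^2\|v\|_2^2$ by the definition of $\zeta$ in \eqref{xi}. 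Combining gives the deterministic inequality
$$\|x^{k+1}-P_S(x^k)\|_2^2\leq\|x^k-P_S(x^k)\|_2^2-(2\alpha-\alpha^2\zeta)\frac{\|(A_{\mathcal{I}_{i_k},:}x^k-b_{\mathcal{I}_{i_k}})_+\|_2^2}{\|A_{\mathcal{I}_{i_k},:}\|_F^2},$$
where the coefficient $2\alpha-\alpha^2\zeta$ is positive by the assumption $\alpha\in(0,2/\zeta)$.

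Next I take the conditional expectation with respect to $i_k$ given $x^k$. Since $i_k$ is drawn from $\mathcal{U}_k$ with probabilities $p_{k,i}$ summing to one, and since the defining property \eqref{Uk} of $\mathcal{U}_k$ asserts $\|(A_{\mathcal{I}_i,:}x^k-b_{\mathcal{I}_i})_+\|_2^2/\|A_{\mathcal{I}_i,:}\|_F^2\geq\epsilon_k\|(Ax^k-b)_+\|_2^2$ uniformly for every $i\in\mathcal{U}_k$, the lower bound survives the convex combination regardless of which probability rule (\ref{pki1}) or (\ref{pki2}) is used. This is the key conceptual reason the convergence rate is insensitive to the probability choice.

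The most delicate step, which I expect to be the main obstacle, is the lower bound $\epsilon_k\geq 1/\|A\|_F^2$. This follows by noting that the maximum of $\|(A_{\mathcal{I}_i,:}x^k-b_{\mathcal{I}_i})_+\|_2^2/\|A_{\mathcal{I}_i,:}\|_F^2$ over $i$ is at least the weighted average with weights $\|A_{\mathcal{I}_i,:}\|_F^2/\|A\|_F^2$, which telescopes to $\|(Ax^k-b)_+\|_2^2/\|A\|_F^2$; this was exactly the calculation used at the end of Section 2 to show $\mathcal{U}_k\neq\emptyset$, so I can reuse it directly. The factor $1/2$ in front of $\epsilon_k$ together with the two equal lower bounds then yields $\epsilon_k\geq 1/\|A\|_F^2$.

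Plugging this into the conditional inequality gives
$$\mathbb{E}\bigl[\|x^{k+1}-P_S(x^k)\|_2^2\,\big|\,x^k\bigr]\leq\|x^k-P_S(x^k)\|_2^2-\frac{2\alpha-\alpha^2\zeta}{\|A\|_F^2}\|(Ax^k-b)_+\|_2^2.$$
Now apply Hoffman's Lemma~\ref{Hoffman} to replace $\|(Ax^k-b)_+\|_2^2$ by $L^{-2}\|x^k-P_S(x^k)\|_2^2$, and use the trivial inequality $\|x^{k+1}-P_S(x^{k+1})\|_2^2\leq\|x^{k+1}-P_S(x^k)\|_2^2$ (because $P_S(x^{k+1})$ is the nearest point in $S$ to $x^{k+1}$). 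Taking full expectations and iterating the resulting one-step contraction $k$ times yields the claimed bound. No further delicate arguments are required; beyond the $\epsilon_k$ bound the remaining work is routine bookkeeping.
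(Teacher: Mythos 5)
Your proposal is correct and follows essentially the same route as the paper's proof: the same expansion about $P_S(x^k)$, the same cross-term bound from $A_{\mathcal{I}_{i_k},:}P_S(x^k)\leq b_{\mathcal{I}_{i_k}}$, the same use of $\zeta$ for the quadratic term, the same conditional-expectation step via the definition of $\mathcal{U}_k$, the reduction $\epsilon_k\geq 1/\|A\|_F^2$ by the weighted-average argument, and Hoffman's lemma to close the recursion. The only cosmetic difference is that you invoke $\|x^{k+1}-P_S(x^{k+1})\|_2^2\leq\|x^{k+1}-P_S(x^k)\|_2^2$ at the end rather than at the outset, which changes nothing.
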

\begin{proof}
	Straightforward calculations yield
	\begin{equation}\label{5}
		\begin{aligned}
			\| x^{k+1}-P_S( x^{k+1})\|^2_2 \leq &\| x^{k+1}-P_S( x^{k})\|^2_2\\
			=& \bigg\| x^k-\alpha\frac{(A_{\mathcal{I}_{i_k},:})^\top(A_{\mathcal{I}_{i_k},:}x^k-b_{\mathcal{I}_{i_k}})_+}{\|A_{\mathcal{I}_{i_k},:}\|^2_F}-P_S( x^{k})\bigg\|^2_2\\
			= &\| x^{k}-P_S( x^{k})\|^2_2+\alpha^2\frac{\|(A_{\mathcal{I}_{i_k},:})^\top(A_{\mathcal{I}_{i_k},:}x^k-b_{\mathcal{I}_{i_k}})_+\|^2_2}{\|A_{\mathcal{I}_{i_k},:}\|^4_F}
			\\
			&-\frac{2\alpha}{\|A_{\mathcal{I}_{i_k},:}\|^2_F}\big\langle(A_{\mathcal{I}_{i_k},:})^\top(A_{\mathcal{I}_{i_k},:}x^k-b_{\mathcal{I}_{i_k}})_+,x^{k}-P_S( x^{k})\big\rangle.
		\end{aligned}
	\end{equation}
	Noting that $P_S( x^{k})\in S$, hence we have $A_{\mathcal{I}_{i_k},:}P_S( x^{k})\leq b_{\mathcal{I}_{i_k}}$. So
	\begin{equation}\label{6}
		\begin{aligned}
			\langle(A_{\mathcal{I}_{i_k},:})^\top(A_{\mathcal{I}_{i_k},:}x^k-b_{\mathcal{I}_{i_k}})_+, x^{k}-P_S( x^{k})\rangle&=\langle(A_{\mathcal{I}_{i_k},:}x^k-b_{\mathcal{I}_{i_k}})_+, A_{\mathcal{I}_{i_k},:}(x^{k}-P_S( x^{k}))\rangle\\
			&\geq\langle(A_{I_{i_k},:}x^k-b_{I_{i_k}})_+,A_{I_{i_k},:}x^{k}-b_{I_{i_k}}\rangle\\
			&=\|(A_{\mathcal{I}_{i_k},:}x^k-b_{\mathcal{I}_{i_k}})_+\|^2_2.
		\end{aligned}
	\end{equation}
	It follows from \eqref{xi} that
	\begin{equation}\label{7}
		\begin{aligned}
			\frac{\|(A_{\mathcal{I}_{i_k},:})^\top(A_{\mathcal{I}_{i_k},:}x^k-b_{\mathcal{I}_{i_k}})_+\|^2_2}{\|A_{\mathcal{I}_{i_k},:}\|^4_F}
			&\leq\frac{\sigma^2_{\max}(A_{\mathcal{I}_{i_k},:})}{\|A_{\mathcal{I}_{i_k},:}\|^2_F}\frac{\|(A_{\mathcal{I}_{i_k},:}x^k-b_{\mathcal{I}_{i_k}})_+\|^2_2}{\|A_{\mathcal{I}_{i_k},:}\|^2_F}\\
			&\leq\zeta\frac{\|(A_{\mathcal{I}_{i_k},:}x^k-b_{\mathcal{I}_{i_k}})_+\|^2_2}{\|A_{\mathcal{I}_{i_k},:}\|^2_F}.
		\end{aligned}
	\end{equation}
	Substituting  (\ref{6}) and (\ref{7}) into  (\ref{5}), we obtain
	$$\| x^{k+1}-P_S( x^{k+1})\|^2_2\leq \| x^{k}-P_S( x^{k})\|^2_2-(2\alpha-\alpha^2\zeta)\frac{\|(A_{\mathcal{I}_{i_k},:}x^k-b_{\mathcal{I}_{i_k}})_+\|^2_2}{\|A_{\mathcal{I}_{i_k},:}\|^2_F}. $$
	By taking conditional expectation on both sides of this inequality, we get
	$$\begin{aligned}
	\mathbb{E}_k\left[\| x^{k+1}-P_S( x^{k+1})\|^2_2 \right]
	&\leq \| x^{k}-P_S( x^{k})\|^2_2-(2\alpha-\alpha^2\zeta)\mathbb{E}_k\left[\frac{\|(A_{\mathcal{I}_{i_k},:}x^k-b_{\mathcal{I}_{i_k}})_+\|^2_2}{\|A_{\mathcal{I}_{i_k},:}\|^2_F}\right]\\
	&=\| x^{k}-P_S( x^{k})\|^2_2-(2\alpha-\alpha^2\zeta)\sum\limits_{i_k\in \mathcal{U}_k}p_{k,i}\frac{\|(A_{\mathcal{I}_{i_k},:}x^k-b_{\mathcal{I}_{i_k}})_+\|^2_2}{\|A_{\mathcal{I}_{i_k},:}\|^2_F}\\
	&\overset{(a)}\leq \| x^{k}-P_S( x^{k})\|^2_2-(2\alpha-\alpha^2\zeta)\epsilon_k \|(Ax^k-b)_+\|^2_2\\
	&=\left(1-\left(2\alpha-\alpha^2\zeta\right)\frac{\epsilon_k \|(Ax^k-b)_+\|^2_2}{\| x^{k}-P_S( x^{k})\|^2_2}\right)\| x^{k}-P_S( x^{k})\|^2_2 \\
	&\overset{(b)}\leq \left(1-\frac{(2\alpha-\alpha^2\zeta)\epsilon_k }{L^2}\right)\| x^{k}-P_S( x^{k})\|^2_2,
	\end{aligned}
	$$
	where $(a)$ follows from the definition of $\mathcal{U}_k$ and $2\alpha-\alpha^2\zeta>0$ and $(b)$ follows from the Hoffman bound.
	
	In view of the definition of $\epsilon_k$ in (\ref{epsilon_k}), we obtain
	$$
	\begin{aligned}
	\epsilon_k\| A \|^2_F
	&=\frac{1}{2}\frac{\| A \|^2_F }{\|(Ax^k-b)_+\|^2_2}\max\limits_{1\leq i\leq t}\frac{\|(A_{\mathcal{I}_i,:}x^k-b_{\mathcal{I}_i})_+\|^2_2}{\|A_{\mathcal{I}_i,:}\|^2_F}+\frac{1}{2}\\
	&=\frac{\max\limits_{1\leq i\leq t}\frac{\|(A_{\mathcal{I}_i,:}x^k-b_{\mathcal{I}_i})_+\|^2_2}{\|A_{\mathcal{I}_i,:}\|^2_F}}{2\sum\limits_{i=1}^t \frac{\|A_{\mathcal{I}_i,:}\|^2_F}{\| A\|^2_F }\frac{\|(A_{\mathcal{I}_i,:}x^k-b_{\mathcal{I}_i})_+\|^2_2}{\|A_{\mathcal{I}_i,:}\|^2_F}}+\frac{1}{2}\\
	&\geq\frac{1}{2}+\frac{1}{2}\\
	&=1. \notag
	\end{aligned}
	$$
	Thus, we have
	\begin{equation}\label{10}
		\mathbb{E}_k\left[\| x^{k+1}-P_S( x^{k+1})\|^2_2 \right]
		\leq \left(1-\frac{2\alpha-\alpha^2\zeta}{L^2\| A \|^2_F}\right)\| x^{k}-P_S( x^{k})\|^2_2.
	\end{equation}
	By taking full expectation on both sides of the inequality $(\ref{10})$, we have
	$$
	\mathbb{E}\left[\| x^{k+1}-P_S( x^{k+1})\|^2_2 \right]
	\leq \left(1-\frac{2\alpha-\alpha^2\zeta}{L^2\| A \|^2_F}\right)\mathbb{E}\left[\| x^{k}-P_S( x^{k})\|^2_2\right].
	$$
	By induction on the iteration index $k$, we can obtain the desired result.
\end{proof}

Next, we analyze the convergence of the GRABP method with adaptive stepsize.

\begin{theorem}\label{convergence_2}
	Suppose that the linear feasibility problem (\ref{inequalities}) is consistent, i.e., the feasible region $S$ is nonempty, and the stepsize $\alpha_k$ of the $k$-th iteration of the GRABP method is  chosen as in  (\ref{alpha})  with $w\in(0,2)$. Then the iteration sequence  $\{ x^k\}^\infty_{k=0}$ generated by the GRABP-a method satisfies
	$$
	\mathbb{E}[\| x^{k}-P_S( x^{k})\|^2_2 ]
	\leq \left(1-\frac{2w-w^2}{\zeta L^2\| A \|^2_F}\right)^k\| x^{0}-P_S( x^{0})\|^2_2,
	$$
	where $P_S(x^{k})$ represents the orthogonal projection of $x^k$ onto the  feasible  region $S$ and $\zeta$ is defined as  \eqref{xi}.
	
\end{theorem}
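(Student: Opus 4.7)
The plan is to mimic the structure of the proof of Theorem \ref{convergence_1}, but exploit the specific form of the adaptive stepsize \eqref{alpha} to obtain a cleaner one-step decrease that avoids the quadratic-in-$\alpha$ penalty term from (\ref{5}).

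First I would start, exactly as in the consistent-case proof, from the projection bound
$\|x^{k+1}-P_S(x^{k+1})\|^2_2 \leq \|x^{k+1}-P_S(x^{k})\|^2_2$
and expand the square to produce the three-term identity that appears as (\ref{5}). Introducing the shorthands $R_k := \|(A_{\mathcal{I}_{i_k},:}x^k-b_{\mathcal{I}_{i_k}})_+\|^2_2$, $F_k := \|A_{\mathcal{I}_{i_k},:}\|^2_F$ and $Q_k := \|(A_{\mathcal{I}_{i_k},:})^\top(A_{\mathcal{I}_{i_k},:}x^k-b_{\mathcal{I}_{i_k}})_+\|^2_2$, the adaptive stepsize reads $\alpha_k = w\,R_k F_k/Q_k$, and the inner-product lower bound (\ref{6}) still applies verbatim since it did not depend on the choice of $\alpha_k$. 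Substituting $\alpha_k$ into the expanded identity, the quadratic term $\alpha_k^2 Q_k/F_k^2$ and the linear term $2\alpha_k R_k/F_k$ combine neatly into $(w^2-2w) R_k^2/Q_k$, so that
\begin{equation*}
\|x^{k+1}-P_S(x^{k+1})\|^2_2 \leq \|x^{k}-P_S(x^{k})\|^2_2 - (2w-w^2)\frac{R_k^2}{Q_k}.
\end{equation*}

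Next I would convert $R_k^2/Q_k$ back into the familiar ratio $R_k/F_k$ using the spectral norm bound $Q_k \leq \sigma^2_{\max}(A_{\mathcal{I}_{i_k},:})\,R_k$, which gives $R_k^2/Q_k \geq R_k/\sigma^2_{\max}(A_{\mathcal{I}_{i_k},:})$, and then the definition of $\zeta$ in \eqref{xi} yields $R_k^2/Q_k \geq R_k/(\zeta F_k)$. Taking conditional expectation and using the definition of $\mathcal{U}_k$ from \eqref{Uk} to lower-bound each selected ratio by $\epsilon_k \|(Ax^k-b)_+\|^2_2$, then invoking $\epsilon_k \|A\|_F^2 \geq 1$ (the inequality already established in the preamble of Section \ref{sect:3}) and the Hoffman bound from Lemma \ref{Hoffman}, I obtain
\begin{equation*}
\mathbb{E}_k\left[\|x^{k+1}-P_S(x^{k+1})\|^2_2\right] \leq \left(1 - \frac{2w-w^2}{\zeta L^2 \|A\|^2_F}\right) \|x^{k}-P_S(x^{k})\|^2_2.
\end{equation*}
Finally I would take full expectation and induct on $k$ to reach the stated geometric rate.

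The only nonroutine step is the algebraic consolidation that follows the substitution of the adaptive stepsize: it is important that the quadratic and linear contributions collapse into the single term $(2w-w^2)R_k^2/Q_k$ (so that the condition $w\in(0,2)$ suffices for contraction, independently of $\zeta$), and that the subsequent $\sigma_{\max}^2$ bound reintroduces $\zeta$ in exactly the place needed to match the ratio $R_k/F_k$ used by the greedy criterion. Beyond this, all remaining moves are direct replays of the corresponding estimates in the proof of Theorem \ref{convergence_1}.
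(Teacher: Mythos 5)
Your proposal is correct and follows essentially the same route as the paper's own proof: expand the one-step identity as in \eqref{5}, apply the inner-product bound \eqref{6}, substitute the adaptive stepsize so the linear and quadratic terms collapse into $-(2w-w^2)\,\|(A_{\mathcal{I}_{i_k},:}x^k-b_{\mathcal{I}_{i_k}})_+\|_2^4/\|(A_{\mathcal{I}_{i_k},:})^\top(A_{\mathcal{I}_{i_k},:}x^k-b_{\mathcal{I}_{i_k}})_+\|_2^2$, reintroduce $\zeta$ via the $\sigma_{\max}^2$ bound, and finish with the greedy-set inequality, $\epsilon_k\|A\|_F^2\geq 1$, the Hoffman bound, and induction. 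No gaps.
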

\begin{proof}
	By using similar arguments as that in \eqref{5}, we have
	$$
	\begin{aligned}
	\| x^{k+1}-P_S( x^{k+1})\|^2_2 \leq& \| x^{k}-P_S( x^{k})\|^2_2+\alpha_k^2\frac{\|(A_{\mathcal{I}_{i_k},:})^\top(A_{\mathcal{I}_{i_k},:}x^k-b_{\mathcal{I}_{i_k}})_+\|^2_2}{\|A_{\mathcal{I}_{i_k},:}\|^4_F}\\
	&-\frac{2\alpha_k}{\|A_{\mathcal{I}_{i_k},:}\|^2_F}\langle(A_{\mathcal{I}_{i_k},:})^\top(A_{\mathcal{I}_{i_k},:}x^k-b_{\mathcal{I}_{i_k}})_+,x^{k}-P_S( x^{k})\rangle\\
	\leq& \|x^{k}-P_S(x^{k})\|^2_2+\alpha_k^2\frac{\|(A_{\mathcal{I}_{i_k},:})^\top(A_{\mathcal{I}_{i_k},:}x^k-b_{\mathcal{I}_{i_k}})_+\|^2_2}{\|A_{\mathcal{I}_{i_k},:}\|^4_F}\\
	&-\frac{2\alpha_k\|(A_{\mathcal{I}_{i_k},:}x^k-b_{\mathcal{I}_{i_k}})_+\|^2_2}{\|A_{\mathcal{I}_{i_k},:}\|^2_F}.
	\end{aligned}
	$$
	Substituting  $\alpha_k$ into this equality, we obtain
	$$
	\begin{aligned}
	&\| x^{k+1}-P_S( x^{k+1})\|^2_2
	\\
	&\leq \| x^{k}-P_S(x^{k})\|^2_2
	-(2w-w^2)\frac{\|(A_{\mathcal{I}_{i_k},:}x^k-b_{\mathcal{I}_{i_k}})_+\|^2_2\| A_{\mathcal{I}_{i_k},:}\|^2_F}{\|(A_{\mathcal{I}_{i_k},:})^\top(A_{\mathcal{I}_{i_k},:}x^k-b_{\mathcal{I}_{i_k}})_+\|^2_2}
	\frac{\|(A_{\mathcal{I}_{i_k},:}x^k-b_{\mathcal{I}_{i_k}})_+\|^2_2}{\|A_{\mathcal{I}_{i_k},:}\|^2_F}.
	\end{aligned}$$
	Since $2w-w^2>0$  and
	$$
	\|(A_{\mathcal{I}_{i_k},:})^\top(A_{\mathcal{I}_{i_k},:}x^k-b_{\mathcal{I}_{i_k}})_+\|^2_2
	\leq\sigma^2_{\max}(A_{\mathcal{I}_{i_k},:})\|(A_{\mathcal{I}_{i_k},:}x^k-b_{\mathcal{I}_{i_k}})_+\|^2_2,
	$$
	we have
	$$\| x^{k+1}-P_S( x^{k+1})\|^2_2\leq \| x^{k}-P_S( x^{k})\|^2_2-\frac{2w-w^2}{\zeta}\frac{\|(A_{\mathcal{I}_{i_k},:}x^k-b_{\mathcal{I}_{i_k}})_+\|^2_2}{\|A_{\mathcal{I}_{i_k},:}\|^2_F}.
	$$
	Therefore, we can obtain
	\begin{equation}\label{12}
		\begin{split}
			\mathbb{E}_k\big[\| x^{k+1}-P_S( x^{k+1})\|^2_2 \big]
			&\leq \| x^{k}-P_S( x^{k})\|^2_2-\frac{2w-w^2}{\zeta}\mathbb{E}_k\left[\frac{\|(A_{\mathcal{I}_{i_k},:}x^k-b_{\mathcal{I}_{i_k}})_+\|^2_2}{\|A_{\mathcal{I}_{i_k},:}\|^2_F}\right]\\
			&=\| x^{k}-P_S( x^{k})\|^2_2-\frac{2w-w^2}{\zeta}\sum\limits_{i_k\in \mathcal{U}_k}p_{k,i}\frac{\|(A_{\mathcal{I}_{i_k},:}x^k-b_{\mathcal{I}_{i_k}})_+\|^2_2}{\|A_{\mathcal{I}_{i_k},:}\|^2_F}\\
			&\overset{(c)}\leq \| x^{k}-P_S( x^{k})\|^2_2-\frac{2w-w^2}{\zeta}\epsilon_k \|(Ax^k-b)_+\|^2_2\\
			&=\left(1-\frac{2w-w^2}{\zeta}\frac{\epsilon_k \|(Ax^k-b)_+\|^2_2}{\| x^{k}-P_S( x^{k})\|^2_2}\right)\| x^{k}-P_S( x^{k})\|^2_2 \\
			&\leq \left(1-\frac{2w-w^2}{\zeta L^2\| A \|^2_F}\right)\| x^{k}-P_S( x^{k})\|^2_2,
		\end{split}
	\end{equation}
	where $(c)$ follows from the definition of $\mathcal{U}_k$ and the fact that $2w-w^2>0$.
	By taking full expectation on both sides of $(\ref{12})$,  we get
	$$
	\mathbb{E}[\| x^{k+1}-P_S( x^{k+1})\|^2_2 ]
	\leq \left(1-\frac{2w-w^2}{\zeta L^2\| A \|^2_F}\right)\mathbb{E}[\| x^{k}-P_S( x^{k})\|^2_2].
	$$
	We can obtain the desired result by induction on the iteration index $k$.
\end{proof}

\begin{remark}  It can be seen from  Theorem \ref{convergence_1} that the convergence factor of the GRABP-c method is $1-\frac{2\alpha-\alpha^2\zeta}{L^2\| A \|^2_F}$, and it reaches the minimum value $1-\frac{1}{\zeta L^2\| A \|^2_F}$ when $\alpha=\frac{1}{\zeta }$.
	Similarly, from Theorem \ref{convergence_2}, the convergence factor of the GRABP-a method is $1-\frac{2w-w^2}{\zeta L^2\| A \|^2_F}$, which reaches the minimum value $1-\frac{1}{\zeta L^2\| A \|^2_F}$ when $w=1$.
\end{remark}

\section{Experimental results}

In this section, we perform numerical experiments to show the computational efficiency of the GRABP algorithm (Algorithms GRABP-c and GRABP-a), and compare   the number of iteration steps (denoted by ``IT") and the computing time in seconds (denoted by ``CPU") with those of the RP method, the SKM method, the GSKM method, and the PASKM method. For a fair comparison, we run these algorithms 10 times and give the average performance of the experiments. All experiments are carried out using MATLAB on a personal computer ( Intel(R) Core(TM) i7-8700 CPU @3.20GHz 3.19 GHz ).

\subsection{Numerical setup}

To analyze computational performance, we perform  numerical experiments for a wide range of instances, including  randomly generated problems and real-world test problems, where the real-world test instances are the SuiteSparse Matrix Collection \cite{kolodziej2019suitesparse}  and the sparse Netlib LP instances  \cite{netlie}. We use $x^0=0\in\mathbb{R}^n$ as an initial point. In testing randomly generated problems and the SuiteSparse Matrix Collection, the experiments are terminated when the relative solution error (RES) at $x^k$ is less than $10^{-8}$, where RES is defined by $$\text{RES}=\frac{\| (Ax^k-b)_+\|_2}{\| b\|_2}.$$  When testingthe sparse Netlib LP instances, we set the stopping criterion for the experiment to be $$\frac{max(Ax^k-b)}{ max(Ax^0-b)}\leq \phi,$$ where $\phi$ is the tolerance gap. In addition, when the coputation time is greater than 50 seconds, i.e., CPU$>50$, the algorithm  forces the forced.

During our test, we use the row indices $\mathcal{I}_i$ of the random partition $\{\mathcal{I}_1, \mathcal{I}_2, \cdots, \mathcal{I}_t\}$ defined as in (\ref{Ik}).
We test the randomly generated problem by dividing the rows of the matrix into ten blocks, i.e., $t = 10$.
In testing the SuiteSparse Matrix Collection, we take $t=\lceil\| A\|^2_2\rceil$.
In testing the sparse Netlib LP instances, we take $t=5$.
The probability criterion $p_{k,i}$ in Step 4 of the GRABP method is chosen as \eqref{pki1} with $p=2$.
For the constant stepsizes of the GRABP-c, we use $\alpha=\frac{1}{\zeta}$ and $\alpha=\frac{1.95}{\zeta}$ with  $\zeta$ being defined as in (\ref{xi}). For the adaptive stepsizes of the GRABP-a, we use $\alpha_k$ being defined as in (\ref{alpha}) with $w=1$ and $w=1.95$.
The SKM, GSKM, and PASKM algorithms involve the selection of many parameters as well, and we have selected a set of parameters with better performance based on the literature \cite{morshed2021sampling}. The parameters of the PASKM algorithm are selected as the PASKM-2 algorithm (see the literature \cite{morshed2021sampling} for details).

\subsection{Experiments on randomly generated instances }
For the randomly generated coefficient matrix $A$, we mainly consider two types, namely dense and sparse matrices. We randomly generate the dense matrix by the MATLAB function ``{\tt randn}". The sparse matrix is generated randomly by the MATLAB function ``{\tt sprandn}" with a density of $\frac{1}{2log(mn)}$ for the non-zero elements.
To ensure that the system (\ref{inequalities}) is consistent, i.e., $S\neq\emptyset$, we randomly generate vectors $x_1\in \mathbb{R}^{n} $, $x_2\in \mathbb{R}^{n}$, $x_3\in \mathbb{R}^{m} $ and  set the right-hand side as $b=0.5Ax_1+0.5Ax_2+x_3$. Both $x_1$ and $x_2$  are generated randomly by the MATLAB function ``{\tt randn}". The vector $x_3$  is a randomly generated vector with elements in the range $[0.1,1]$.

From Table \ref{Tabdense_n} to Table \ref{Tabsparse}, we tested the performance of all algorithms when the coefficient matrix is dense and sparse, respectively.
We test two sets of coefficient matrices with a constant number of rows but an increasing number of columns in Tables \ref{Tabdense_n} and \ref{Tabsprse_n}. Regardless of whether the coefficient matrix is dense or sparse, the number of iteration steps and the computational time increases with the number of matrix columns for all methods except for the  GRABP-c algorithm.
Tables \ref{Tabdense} and \ref{Tabsparse} show the performance of the algorithms at different orders of the coefficient matrix.
From the Tables, we can observe that all algorithms performed well.
The GRABP-c algorithm with $\alpha=\frac{1.95}{\zeta}$ performs better than the  GRABP-c algorithm with $\alpha=\frac{1}{\zeta}$, while  the GRABP-a algorithm with $w=1.95$ performs better than the  GRABP-a algorithm with $w=1$. In most cases, the GRABP-a algorithm with $w=1.95$ takes the least time for computation.

\begin{table*}[htp]
%\tiny
%\scriptsize
\footnotesize
\centering
\caption{ \title{\scriptsize Numerical results for $m$-by-$n$ random dense  matrices $A$ with $m=5000$ and different $n$.}}
\label{Tabdense_n}
\renewcommand\arraystretch{1.1}
\setlength{\tabcolsep} {1.8mm} {
\begin{tabular}{l|lllllll}
\toprule
\multicolumn{2}{c}{$n$}  &$100$  &$200$  &$300$   &$400$   &$500$   &$600$\\
\midrule
\multirow{2}{*}{RP}
&IT        &75987.4       &103241.8     &134719.6     &164177.6      &209444.5      &236523        \\
&CPU       &2.5778e+00    &3.5595e+00   &4.6618e+00   &5.8100e+00    &7.5766e+00    &8.7382e+00  \\
\midrule
\multirow{2}{*}{SKM}
&IT        &1358.8        &2533.1       &4121.5       &5862          &8987.8        &11486.6       \\
&CPU       &4.4360e-01    &9.6030e-01   &1.8132e+00   &3.0207e+00    &5.1937e+00    &7.3037e+00    \\
\midrule
\multirow{2}{*}{GSKM}
&IT        &1239.6        &2018.4       &3159.5       &4832.6        &7343.3        &9573.8        \\
&CPU       &3.8290e-01    &8.0590e-01   &1.7159e+00   &3.6972e+00    &7.2677e+00    &1.1722e+01   \\
\midrule
\multirow{2}{*}{PASKM}
&IT        &433.8         &752          &971.8        &1308          &1603.6        &1874.8       \\
&CPU  &\bf{1.3050e-01}    &3.2420e-01   &5.3290e-01   &1.0126e+00    &1.5878e+00    &2.3120e+00  \\
\midrule
\multirow{1}{*}{GRABP-c}
&IT        &2294.9        &2078.4       &1946.7       &1977.7        &2154.9        &2087.9    \\
\multirow{1}{*}{$(\alpha=\frac{1}{\zeta})$}
&CPU       &7.5010e-01    &9.6030e-01   &1.2090e+00   &1.4622e+00    &1.9276e+00    &2.2581e+00   \\
\midrule
\multirow{1}{*}{GRABP-c}
&IT        &1111.7        &948.7        &924.1        &926.8         &992.7         &937.3   \\
\multirow{1}{*}{$(\alpha=\frac{1.95}{\zeta})$}
&CPU       &4.6450e-01    &5.3740e-01   &7.2270e-01   &8.7230e-01    &1.1098e+00    &1.2579e+00   \\
\midrule
\multirow{1}{*}{GRABP-a}
&IT        &75.7          &105.4        &152          &192.3         &248.8         &258.4     \\
\multirow{1}{*}{$(w=1)$}
&CPU       &1.8360e-01    &2.3550e-01   &3.0020e-01   &3.5460e-01    &4.3930e-01    &5.2570e-01   \\
\midrule
\multirow{1}{*}{GRABP-a}
 &IT    &\bf{28.3}     &\bf{43.7}       &\bf{55.2}       &\bf{65.2}      &\bf{77.4}       &\bf{88.5} \\
\multirow{1}{*}{$(w=1.95)$}
&CPU    &1.7900e-01    &\bf{2.1230e-01} &\bf{2.5060e-01} &\bf{2.8730e-01}&\bf{3.2500e-01} &\bf{3.7540e-01}      \\
\bottomrule
\end{tabular}}
\end{table*}

\begin{table*}[htp]
%\tiny
%\scriptsize
\footnotesize
\centering
\caption{ \title{\scriptsize Numerical results for $m$-by-$n$ random sparse  matrices $A$ with $m=5000$ and different $n$.}}
\label{Tabsprse_n}
\renewcommand\arraystretch{1.1}
\setlength{\tabcolsep} {1.8mm} {
\begin{tabular}{l|lllllll}
\toprule
\multicolumn{2}{c}{$n$}  &$100$  &$200$  &$300$   &$400$   &$500$   &$600$\\
\midrule
\multirow{2}{*}{RP}
&IT        &102643.8      &163138.9     &223343.9     &233383.9      &317693.8      &294841        \\
&CPU       &3.4062e+00    &5.5635e+00   &7.6438e+00   &8.2230e+00    &1.1406e+01    &1.0865e+01  \\
\midrule
\multirow{2}{*}{SKM}
&IT        &1870.1        &3218.3       &3905.2       &5453.3        &8414.8        &9311.9       \\
&CPU       &5.6240e-01    &1.1410e+00   &1.7219e+00   &2.7823e+00    &4.8358e+00    &6.0915e+00    \\
\midrule
\multirow{2}{*}{GSKM}
&IT        &1476.3        &2673.4       &3147.3       &4548.5        &6729.9        &7591.2       \\
&CPU       &4.5570e-01    &1.0152e+00   &1.6571e+00   &3.4752e+00    &6.7329e+00    &9.5218e+00   \\
\midrule
\multirow{2}{*}{PASKM}
&IT        &537.9         &1346.4       &1829.9       &2614.9        &4205.7        &4637.8       \\
&CPU  &\bf{1.6740e-01}    &5.1300e-01   &9.7920e-01   &1.9981e+00    &4.2786e+00    &5.7903e+00  \\
\midrule
\multirow{1}{*}{GRABP-c}
&IT        &4334.9        &3172.4       &4228.2       &3599.4        &3680.6        &3323.4    \\
\multirow{1}{*}{$(\alpha=\frac{1}{\zeta})$}
&CPU       &1.3123e+00    &1.2356e+00   &2.2755e+00   &2.3735e+00    &3.0562e+00    &3.3482e+00   \\
\midrule
\multirow{1}{*}{GRABP-c}
&IT        &2214.9         &1526.5      &1999.4       &1671.1        &1839          &1580.2   \\
\multirow{1}{*}{$(\alpha=\frac{1.95}{\zeta})$}
&CPU       &7.5320e-01    &7.0260e-01   &1.2315e+00   &1.3005e+00    &1.7398e+00    &1.8413e+00   \\
\midrule
\multirow{1}{*}{GRABP-a}
&IT        &74.9         &98.1          &110.8        &139.2         &166.2         &177.1     \\
\multirow{1}{*}{$(w=1)$}
&CPU       &1.8310e-01    &2.2870e-01   &2.7940e-01   &3.3310e-01    &3.7100e-01    &4.3130e-01   \\
\midrule
\multirow{1}{*}{GRABP-a}
 &IT    &\bf{26.3}    &\bf{37.2}     &\bf{46}         &\bf{51.6}      &\bf{58.9}       &\bf{62.8} \\
\multirow{1}{*}{$(w=1.95)$}
&CPU    &1.7100e-01 &\bf{2.1410e-01} &\bf{2.4490e-01} &\bf{2.7920e-01}&\bf{3.0930e-01} &\bf{3.3700e-01}      \\
\bottomrule
\end{tabular}}
\end{table*}

\begin{table*}[htp]
%\tiny
%\scriptsize
\footnotesize
\centering
\caption{ \title{\small Numerical results for $m$-by-$n$ random dense  matrices $A$.}}
\label{Tabdense}
\renewcommand\arraystretch{1.1}
\setlength{\tabcolsep} {1.8mm} {
\begin{tabular}{l|lllllll}
\toprule
\multicolumn{2}{c}{$m\times n$}  &$1000\times 100$  &$2000\times 200$  &$3000\times 300$   &$4000\times 400$   &$5000\times 500$   &$6000\times 600$\\
\midrule
\multirow{2}{*}{RP}
&IT        &35623.5       &82113.7      &118555.5     &157981.9      &197158.3      &237194.7        \\
&CPU       &2.2570e-01    &9.2740e-01   &1.9676e+00   &4.2829e+00    &7.1908e+00    &9.8938e+00  \\
\midrule
\multirow{2}{*}{SKM}
&IT        &1372.8        &3211.9       &4920.9       &7060.8        &8968.7        &10605.7       \\
&CPU       &1.4610e-01    &5.9440e-01   &1.2913e+00   &3.0899e+00    &5.1660e+00    &7.6251e+00    \\
\midrule
\multirow{2}{*}{GSKM}
&IT        &1191.4        &2538.5       &4017.2       &5634          &6852.4        &8389.6        \\
&CPU       &1.2800e-01    &5.1520e-01   &1.1971e+00   &3.2318e+00    &6.9181e+00    &1.2251e+01   \\
\midrule
\multirow{2}{*}{PASKM}
&IT        &235.8         &554.5        &875.8        &1257.4        &1563.5        &2026.9       \\
&CPU       &2.9000e-02    &1.0850e-01   &2.6190e-01   &7.4220e-01    &1.6022e+00    &3.0270e+00  \\
\midrule
\multirow{1}{*}{GRABP-c}
&IT        &1689.3        &1908.5      &2030.5        &2023.2        &2099.9        &2041.6    \\
\multirow{1}{*}{$(\alpha=\frac{1}{\zeta})$}
&CPU       &2.9940e-01    &3.8480e-01   &6.0820e-01   &1.2052e+00    &1.9307e+00    &2.8221e+00   \\
\midrule
\multirow{1}{*}{GRABP-c}
&IT        &791.9         &921.3        &925.4        &943.4         &943.4         &945.4   \\
\multirow{1}{*}{$(\alpha=\frac{1.95}{\zeta})$}
&CPU       &1.5160e-01    &2.1930e-01   &3.4970e-01   &7.0140e-01    &1.0905e+00    &1.6575e+00   \\
\midrule
\multirow{1}{*}{GRABP-a}
&IT        &175.8         &205.9        &231.7        &217.7         &220.8         &221.6     \\
\multirow{1}{*}{$(w=1)$}
&CPU       &3.2000e-02    &7.2200e-02   &1.3420e-01   &2.6490e-01    &4.2010e-01    &6.8350e-01   \\
\midrule
\multirow{1}{*}{GRABP-a}
 &IT    &\bf{44.2}       &\bf{58.7}      &\bf{67.8}      &\bf{72.9}      &\bf{77.5}      &\bf{83.8} \\
\multirow{1}{*}{$(w=1.95)$}
&CPU    &\bf{1.2000e-02} &\bf{4.3000e-02}&\bf{9.4000e-02}&\bf{2.0200e-01}&\bf{3.1530e-01}&\bf{5.3490e-01}      \\
\bottomrule
\end{tabular}}
\end{table*}

\begin{table*}[htp]
%\tiny
%\scriptsize
\footnotesize
\centering
\caption{ \title{\small Numerical results for $m$-by-$n$ random sparse  matrices $A$.}}
\label{Tabsparse}
\renewcommand\arraystretch{1.1}
\setlength{\tabcolsep} {1.8mm} {
\begin{tabular}{l|lllllll}
\toprule
\multicolumn{2}{c}{$m\times n$}  &$1000\times 100$  &$2000\times 200$  &$3000\times 300$   &$4000\times 400$   &$5000\times 500$   &$6000\times 600$\\
\midrule
\multirow{2}{*}{RP}
&IT        &59492.3       &138220.8     &196983.3     &204329.3      &253219.4      &314986.3        \\
&CPU       &3.7260e-01    &1.5622e+00   &3.2701e+00   &5.5005e+00    &9.1793e+00    &1.3106e+01  \\
\midrule
\multirow{2}{*}{SKM}
&IT        &1156.4        &2582.7       &4748.9       &5725.9        &8232.5        &8557.4     \\
&CPU       &1.1860e-01    &4.6110e-01   &1.2782e+00   &2.4831e+00    &4.7977e+00    &6.2059e+00    \\
\midrule
\multirow{2}{*}{GSKM}
&IT        &1011.7        &2136.8       &3487.2       &4466.7        &6439.7        &6935.7       \\
&CPU       &1.1200e-01    &4.2200e-01   &1.0611e+00   &2.5762e+00    &6.4646e+00    &1.0184e+01   \\
\midrule
\multirow{2}{*}{PASKM}
&IT        &608.5         &1314.2       &2037.4       &2621.5        &3868          &4289.6       \\
&CPU       &6.6000e-02    &2.6760e-01   &6.2570e-01   &1.5164e+00    &3.8957e+00    &6.3062e+00  \\
\midrule
\multirow{1}{*}{GRABP-c}
&IT        &4114.1        &3662.9       &3505.8       &3426.3        &3480.7        &3086.7    \\
\multirow{1}{*}{$(\alpha=\frac{1}{\zeta})$}
&CPU       &6.4530e-01    &7.4430e-01   &9.7310e-01   &1.9008e+00    &2.8947e+00    &3.8220e+00   \\
\midrule
\multirow{1}{*}{GRABP-c}
&IT        &2078.7        &1788.3       &1648.4       &1705.3        &1666.3        &1481.9  \\
\multirow{1}{*}{$(\alpha=\frac{1.95}{\zeta})$}
&CPU       &3.3500e-01    &3.9260e-01   &5.3050e-01   &1.0581e+00    &1.6411e+00    &2.1893e+00   \\
\midrule
\multirow{1}{*}{GRABP-a}
&IT        &107.4         &141.9        &143.9        &150.5         &167.2         &156.8     \\
\multirow{1}{*}{$(w=1)$}
&CPU       &2.1800e-02    &5.4000e-02   &1.1300e-01   &2.3410e-01    &3.9450e-01    &6.0120e-01  \\
\midrule
\multirow{1}{*}{GRABP-a}
&IT    &\bf{39.1}      &\bf{51.7}      &\bf{54.4}      &\bf{56.2}      &\bf{57.4}      &\bf{58.8} \\
\multirow{1}{*}{$(w=1.95)$}
&CPU   &\bf{1.1000e-02}&\bf{4.4000e-02}&\bf{9.3500e-02}&\bf{1.9000e-01}&\bf{3.2210e-01}&\bf{4.9030e-01}      \\
\bottomrule
\end{tabular}}
\end{table*}
%\clearpage
\subsection{Experiments on real-world test instances }

 In this subsection, we consider the following two types of real-world test instances: the SuiteSparse Matrix Collection   and the sparse Netlib LP instances.
 \subsubsection{The SuiteSparse Matrix Collection }

In Table \ref{TabSuiteSparse}, the  coefficient matrix $A$ is chosen from the SuiteSparse Matrix Collection.
For details, we list their sizes, densities, condition numbers (i.e., cond($A$)), and squared Euclidean norms in Table \ref{Tabmatrices}, where  the density of a matrix is defined by
$$ \text{density} =\frac{ \text{number of nonzeros of an $m$-by-$n$ matrix}}{mn}.$$
In addition, the right-hand side $b$ is generated randomly as $b=0.5Ax_1+0.5Ax_2+x_3$.  From Table \ref{TabSuiteSparse}, we can also observe that the GRABP, PASKM, GSKM, and SKM methods outperform the RP method  in terms of both the iteration count and the CPU time. In addition, GRABP-a with $w=1.95$ is more efficient than other methods.
\begin{table*}[htp]
%\tiny
%\scriptsize
\footnotesize
\centering
\caption{ \title{\small Properties of $m$-by-$n$   matrices $A$  from the SuiteSparse  Matrix Collection.}}
\label{Tabmatrices}
\renewcommand\arraystretch{1.1}
\setlength{\tabcolsep} {5.5mm} {
\begin{tabular}{lllllll}
\toprule
\text{Name}  &$m$  &$n$  &\text{density}   &\text{cond($A$)}   &$\|A\|^2_2$\\
\midrule
\text{ash958}           &958        &292        &0.68$\%$           &3.2014           &17.9630                 \\
\text{illc1033}         &1033       &320        &1.43$\%$           &1.8888e+04       &4.5983     \\
\text{well1033}        &1033       &320        &1.43$\%$           &166.1333         &3.2635     \\
\text{ch\_8\_b1}        &1568       &64         &3.13$\%$           &3.3502e+15       &56     \\
\text{illc1850}         &1850       &712        &0.66$\%$           &1.404e+03        &4.5086     \\
\text{Franzl}           &2240       &768        &0.30$\%$           &8.0481e+15       &17.4641     \\
\bottomrule
\end{tabular}}
\end{table*}

\begin{table*}[htp]
%\tiny
%\scriptsize
\footnotesize
\centering
\caption{\title{\small  Numerical results for  matrices $A$ from the  SuiteSparse   Matrix Collection.}}
\label{TabSuiteSparse}
\renewcommand\arraystretch{1.0}
\setlength{\tabcolsep} {1.8mm} {
\begin{tabular}{l|lllllll}
\toprule
\multicolumn{2}{c}{Name}  &\text{ash958}   &\text{illc1033}  &\text{well1033}   &\text{ch\_8\_b1}   &\text{illc1850} &\text{Franzl}\\
\midrule
\multirow{2}{*}{RP}
&IT           &4751          &5353.6        &11629.9       &9139.5        &16288.9       &91585.7     \\
&CPU          &3.2700e-02    &3.8000e-02    &7.6000e-02    &8.4000e-02    &2.2460e-01    &1.5296e+00 \\
\midrule
\multirow{2}{*}{SKM}
&IT           &584.4         &271.4         &451.8         &429.5         &588.1         &4718.7        \\
&CPU          &8.9200e-02    &4.8500e-02    &7.7100e-02    &3.4300e-02    &3.0570e-01    &2.7192e+00    \\
\midrule
\multirow{2}{*}{GSKM}
&IT           &430.3         &193.6         &320.9         &366.8         &427.5         &3553     \\
&CPU          &7.6000e-02    &3.6100e-02    &5.8600e-02    &4.2200e-02    &2.4420e-01    &2.7333e+00  \\
\midrule
\multirow{2}{*}{PASKM}
&IT           &505.4         &265.2         &455.2         &426.1         &605.8         &4696     \\
&CPU          &8.3300e-02    &5.2400e-02    &8.3600e-02    &5.0500e-02    &3.8750e-01    &3.6121e+00 \\
\midrule
\multirow{1}{*}{GRABP-c}
&IT           &594.7         &919.5         &769.6         &1002.6        &1013.2        &1711.8     \\
\multirow{1}{*}{$(\alpha=\frac{1}{\zeta})$}
&CPU          &1.4940e-01    &1.4340e-01    &1.2730e-01    &3.9360e-01    &4.6850e-01    &9.3570e-01   \\
\midrule
\multirow{1}{*}{GRABP-c}
&IT           &193.4         &421.8         &409.3         &375.3          &487.2        &704.7    \\
\multirow{1}{*}{$(\alpha=\frac{1.95}{\zeta})$}
&CPU          &4.8600e-02    &7.7500e-02    &7.9500e-02    &1.5350e-01     &2.7360e-01   &4.5560e-01    \\
\midrule
\multirow{1}{*}{GRABP-a}
&IT           &28.3          &16.6          &17            &43.4           &21.8         &262.9      \\
\multirow{1}{*}{$(w=1)$}
&CPU          &1.3500e-02    &1.3500e-02    &1.2000e-02    &2.8500e-02     &5.5000e-02   &1.9500e-01   \\
\midrule
\multirow{1}{*}{GRABP-a}
 &IT          &\bf{17.7}     &\bf{8.5}      &\bf{9.6}      &\bf{22.9}     &\bf{11.5}     &\bf{48.5} \\
\multirow{1}{*}{$(w=1.95)$}
&CPU       &\bf{1.0300e-02}&\bf{1.2400e-02}&\bf{1.1500e-02}&\bf{2.1900e-02}&\bf{4.8100e-02}&\bf{9.8000e-02}      \\
\bottomrule
\end{tabular}}
\end{table*}
\subsubsection{Netlib LP instances }
In this subsection, we compare the performance of the algorithms for solving Netlib LP test instances. We follow the standard framework used by De Loera et al.\cite{De17} and Morshed et al.\cite{morshed2021sampling} in their work for linear feasibility problems. The problem instances are transformed form standard LP problems(i.e., min $c^\top x$ subject to $Ax=b$, $l\leq x\leq u$ with optimum value $p^*$ ) to an equivalent linear feasibility formulation (i.e., ${\bf A}x\leq {\bf b}$, where ${\bf A}=[A^\top \: -A^\top \:  I \: -I \: c]^\top$ and ${\bf b}=[ b^\top \:  -b^\top \:  u^\top \: -l^\top \: p^*]^\top$).

In Table \ref{realword}, we test a total of 5 instances.  The tolerance gap of the algorithm is $\phi=10^{-2}$ when testing the instances with corner labels $*$.  The tolerance gap of the algorithm is $\phi=10^{-3}$  when testing the instances without corner label.  From Table \ref{realword}, we know that Algorithm GRABP-a takes less computing time compared to the other algorithms.

\begin{table*}[htp]
\footnotesize
\centering
\caption{\title{\small  Numerical results for sparse Netlib LP instances.}}
\label{realword}
\renewcommand\arraystretch{1.0}
\setlength{\tabcolsep} {2mm} {
\begin{tabular}{l|lllllll}

			\toprule
			\multicolumn{2}{c}{Instance}  &share2b        &recipe          &scsd1*         &scsd6*            &fit1d    \\
			\multicolumn{2}{c}{Dimensions}&$189\times79$  &$434\times180$  &$915\times760$ &$1645\times1350$ &$2078\times1026$ \\
			\midrule
			\multirow{2}{*}{RP}
			&IT        &4089705.5    &2128891.2   &145623       &79436.2       &31113.9       \\
			&CPU       &5.0001e+01   &5.0000e+01  &1.4228e+01   &5.0002e+01    &1.8505e+01     \\
			\midrule
			\multirow{2}{*}{SKM}
			&IT        &290975.8     &4225.8      &5244.7       &38117.9       &40846         \\
			&CPU       &1.0075e+01   &2.9540e-01  &1.4213e+00   &5.0003e+01    &4.7280e+01      \\
			\midrule
			\multirow{2}{*}{GSKM}
			&IT        &239814.8     &3930.5      &5249.3       &29043.5      &26364.7        \\
			&CPU       &9.0729e+00   &2.9530e-01  &1.6096e+00   &5.0001e+01    &3.9265e+01     \\
			\midrule
			\multirow{2}{*}{PASKM}
			&IT        &228577.9     &4014.9      &5447.5       &29059.2      &32565.9      \\
			&CPU       &8.5962e+00   &3.2110e-01  &1.6945e+00   &5.0002e+01    &4.8777e+01     \\
			\midrule
			\multirow{1}{*}{GRABP-c}
			&IT        &516628       &419839.1    &97486.8      &38620.3      &28525.7       \\
			\multirow{1}{*}{$(\alpha\zeta =1)$}
			&CPU       &5.0001e+01   &5.0000e+01  &2.5225e+01   &5.0004e+01    &3.5774e+01     \\
			\midrule
			\multirow{1}{*}{GRABP-c}
			&IT        &505367       &419313.9    &48003.7      &38448.5       &14693.1      \\
			\multirow{1}{*}{$(\alpha\zeta=1.95)$}
			&CPU       &5.0000e+01   &5.0000e+01  &1.2406e+01   &5.0004e+01    &1.8724e+01     \\
			\midrule
			\multirow{1}{*}{GRABP-a}
			&IT        &165189.8     &568.6       &1047.4       &20749.2        &29.3          \\
			\multirow{1}{*}{$(w=1)$}
			&CPU  &1.5914e+01 &\bf{6.7000e-02}&\bf{2.8580e-01}&\bf{2.6823e+01}    &1.0850e-01  \\
			\midrule
			\multirow{1}{*}{GRABP-a}
			&IT    &53832.5        &2763.4      &51721        &38481.2      &10.9  \\
			\multirow{1}{*}{$(w=1.95)$}
			&CPU  &\bf{5.2471e+00} &3.1490e-01 &1.3156e+01    &5.0004e+01 &\bf{8.3900e-02}   \\
			\bottomrule
\end{tabular}}
\end{table*}

\subsection{Remarks about the choice of block}
Finally, we show the performance of the GRABP-a algorithm under different blocks.
In Figure \ref{fig:2}, we  test the performance of GRABP-a with $w=1$ and GRABP-a with $w=1.95$ for different number of blocks, where  $t=2$, $t=5$, $t=10$, $t=50$, $t= 100 $, $t=\lceil\| A\|^2_2\rceil=305$, and
the coefficient matrix   is a randomly generated $5000$-by-$500$ sparse matrix.
In Figure \ref{fig:1}, the coefficient matrix   is a randomly generated $5000$-by-$500$ dense matrix.
Since $\lceil\| A\|^2_2\rceil=8492$ exceeds the number of rows of the matrix in such case, we do not test the case where $t=\lceil\| A\|^2_2\rceil$, and we consider the case where  $t=2$, $t=5$, $t=10$, $t=50$, $t= 100 $, and $t=200$.

From Figure  \ref{fig:2} and Figure  \ref{fig:1}, we can see that the convergence rate of the GRABP-a algorithm slows down as the number of blocks $t$ increases.
One can also see that the smaller the number of blocks $t$, the fewer iterative steps the GRABP-a algorithm requires. In particular, the algorithm performs best when $t=2$. In addition, the GRABP-a algorithm with $w=1.95$ performs better than the GRABP-a algorithm with $w=1$ during all of the tests.

\begin{figure}[h]
	\centering
	\begin {minipage}[ht]{0.496\linewidth}
	\centering
	\includegraphics[width=1\linewidth]{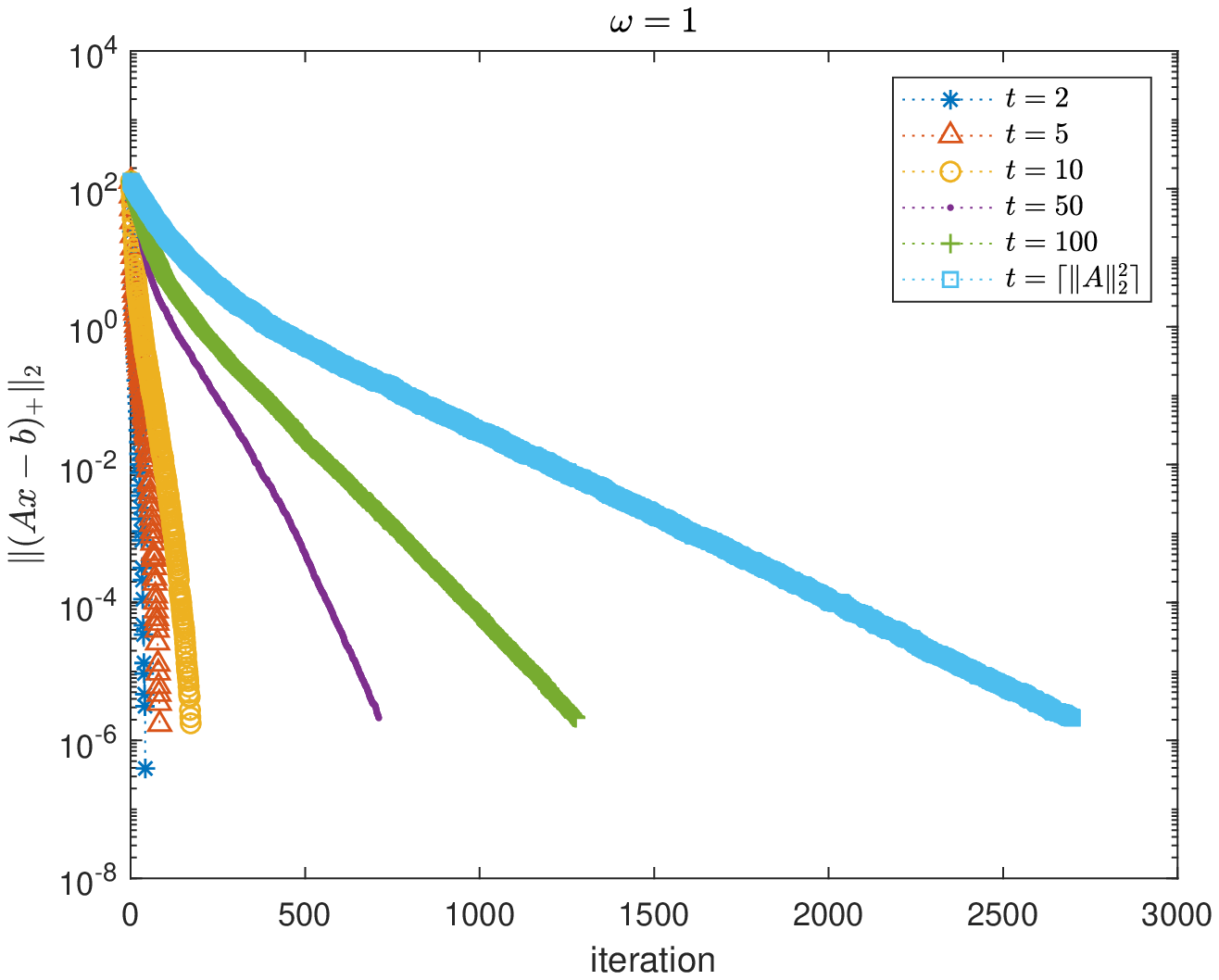}
\end{minipage}
\begin {minipage}[ht]{0.496\linewidth}
\centering
\includegraphics[width=1\linewidth]{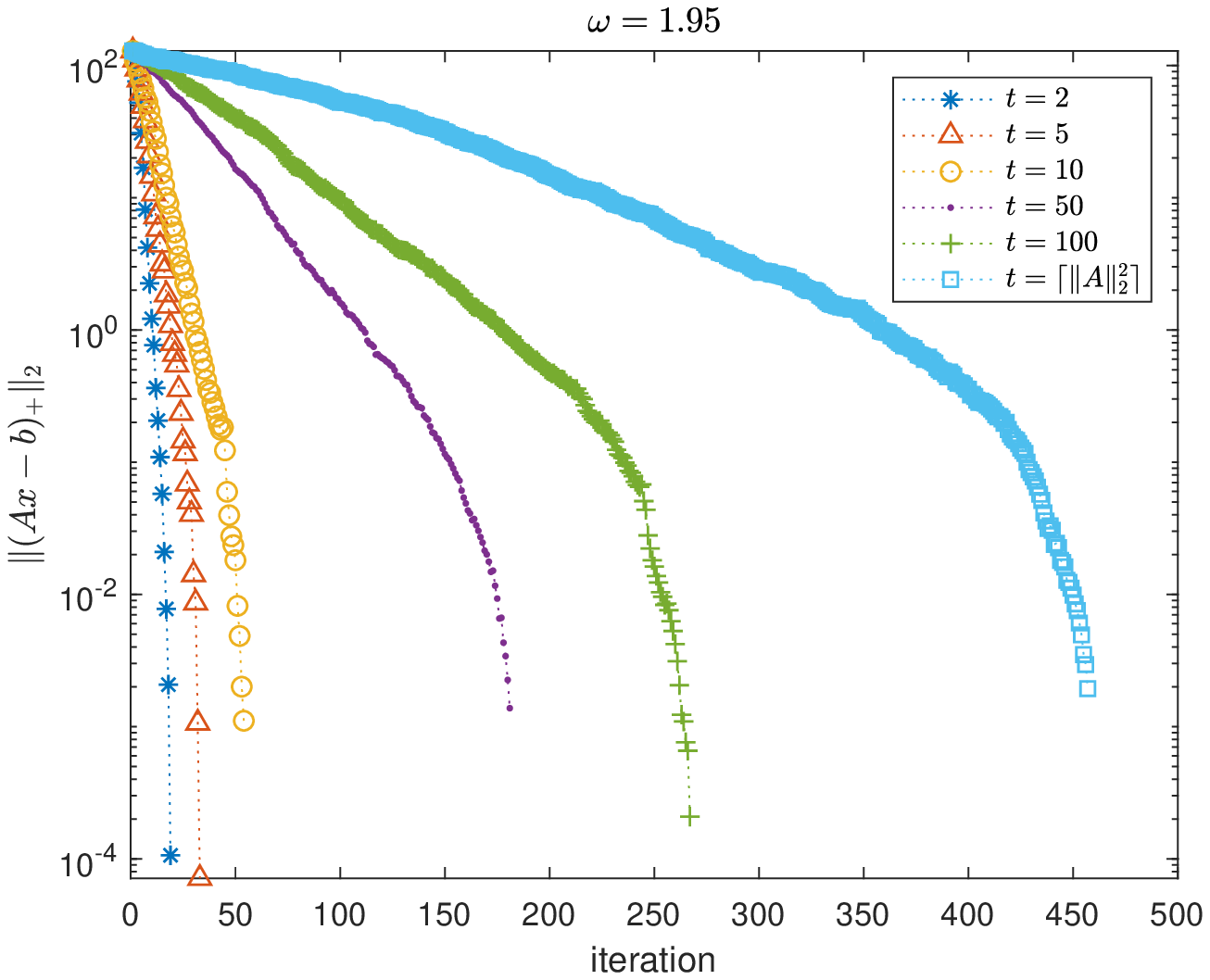}
\end{minipage}
\caption{GRABP-a: A $5000$-by-$500$ randomized sparse  matrix $A$.}\label{fig:2}
\end{figure}

\begin{figure}[t]
\begin {minipage}[t]{0.496\linewidth}
\centering
\includegraphics[width=1\linewidth]{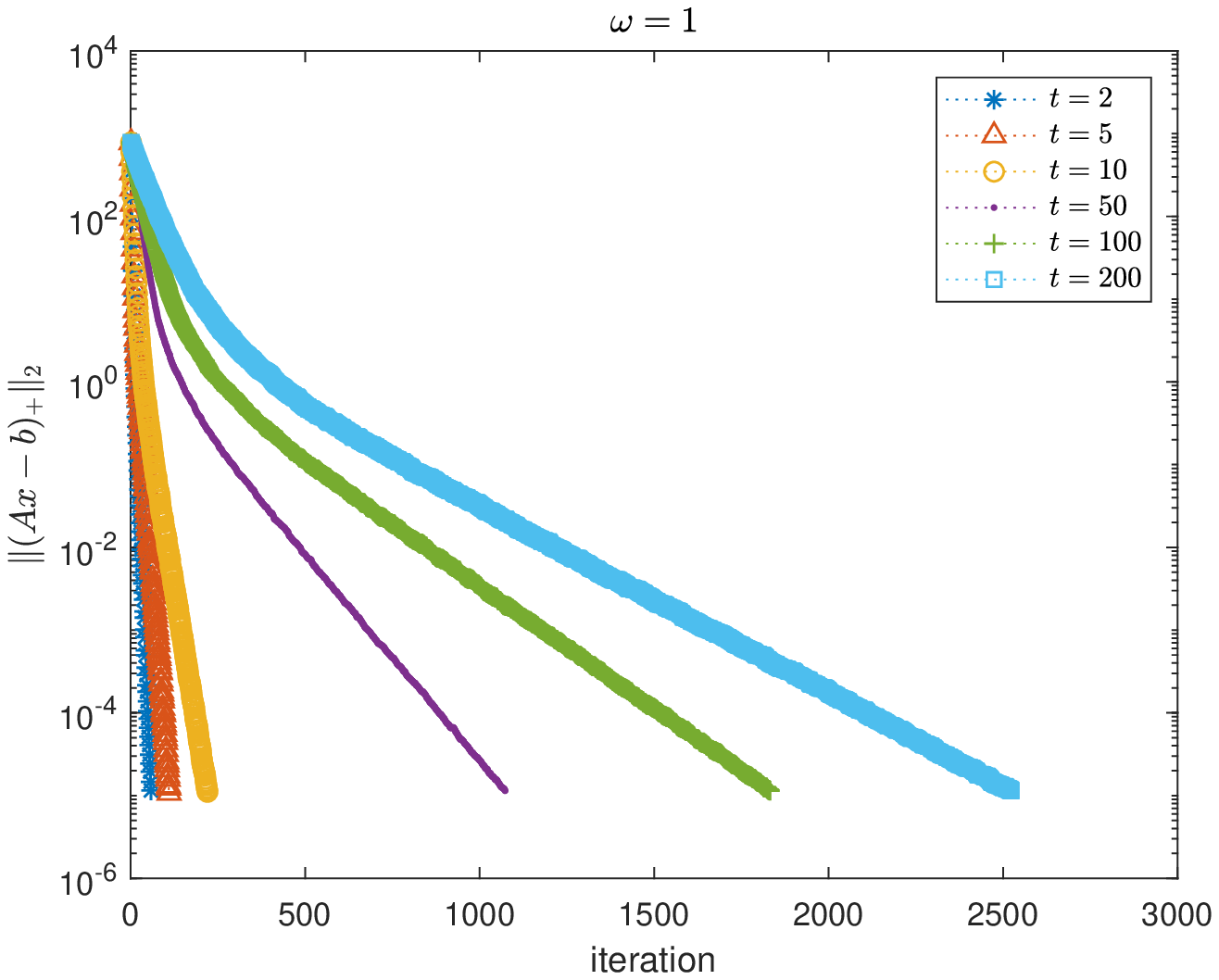}
\end{minipage}
\begin {minipage}[t]{0.496\linewidth}
\centering
\includegraphics[width=1\linewidth]{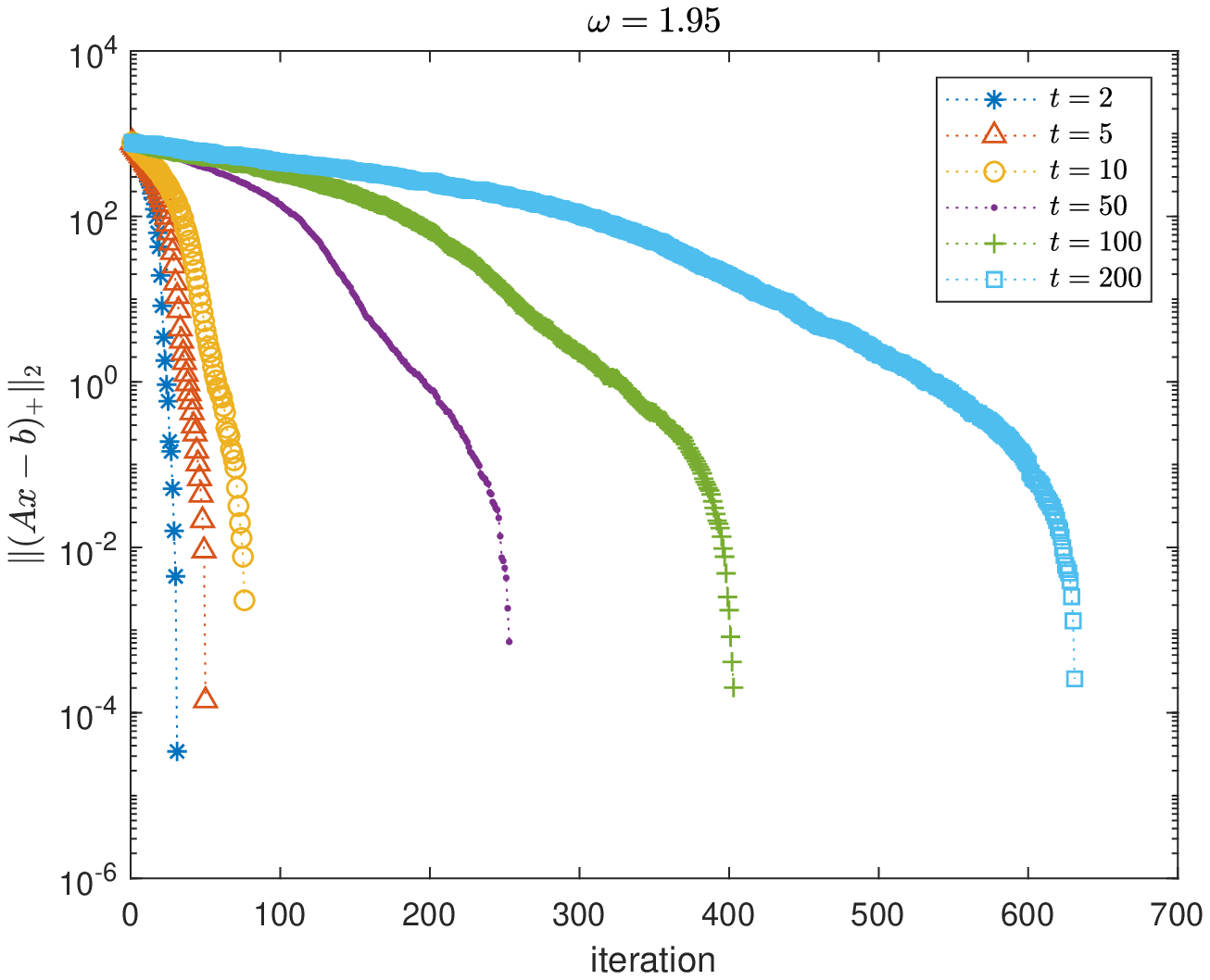}
\end{minipage}
\caption{GRABP-a: A $5000$-by-$500$ randomized dense matrix $A$.}\label{fig:1}
\end{figure}

\section{Conclusions}\label{sec5}
This paper introduced the GRABP method, which inherits ideas from the greedy probability criterion and the average block method, for solving linear feasibility problems.
It was proved that the GRABP method converges linearly in exception with two kinds of choices of extrapolation steps.
Numerical results show that the GRABP method works better over several state-of-the-art methods.

Finally, it should be pointed out that the RP method and its variants only ensure that the iteration sequence $\{x^k\}$ converges to a certain feasible point in $S$. Precisely, Theorems \ref{convergence_1} and \ref{convergence_2} only guarantee that the distance between  $x^k$ and $S$ converges to zero.
However, in practice one may want to find solutions with certain structures in $S$, for example, the least norm solution. The Hildreth's method \cite{hildreth1957quadratic,iusem1990convergence,lent1980extensions,jamil2015hildreth} is also a row action method for solving linear feasibility problems, but with one more benefit: finding the closest point in the solution set to a given point $x^0$, i.e. its iteration sequence $\{x^k\}$ converges to $P_{S}(x^0)$.
This topic is practically valuable and theoretically meaningful, and will be
investigated in detail and discussed in depth in the future.

\nocite{*}% Show all bib entries - both cited and uncited; comment this line to view only cited bib entries;
\bibliography{grabp}
%\bibliography{main1001}
%\section*{Author Biography}

\end{document}